\newcommand{\customitemize}[1]{%
	\begin{enumerate}[label={(#1\arabic*)}, ref=#1\arabic*]
	}
\title[Splitting]{Splitting and Slow Volume Growth for Open Manifolds with Nonnegative Ricci Curvature}
\author{Hongzhi Huang and Xian-Tao Huang}
\address{Hongzhi Huang \\ Department of Mathematics \\ Jinan University\\ Guangzhou 510632}
\email{\href{mailto:huanghz@jnu.edu.cn}{huanghz@jnu.edu.cn}
}
\address{Xian-Tao Huang\\School of Mathematics\\  Sun Yat-sen University\\ Guangzhou 510275}
\email{\href{mailto:hxiant@mail2.sysu.edu.cn}{hxiant@mail2.sysu.edu.cn}}
\newtheorem{thm}{Theorem}[section]
\newtheorem{prop}[thm]{Proposition}
\newtheorem{lem}[thm]{Lemma}
\newtheorem{slem}[thm]{Sublemma}
\newtheorem{cor}[thm]{Corollary}
\newtheorem{mainthm}{Theorem}[section] 
\newaliascnt{myMainThm}{mainthm}        
\newtheorem*{myMainThm*}{Theorem \Alph{mainthm}} 
\newenvironment{maintheorem}[1][]
{\begin{mainthm}[#1]}
	{\end{mainthm}}
\theoremstyle{definition}
\theoremstyle{remark}
\newtheorem{rem}[thm]{Remark}
\numberwithin{equation}{section}
\newcommand {\diam }{\mathrm{diam}}
\newcommand {\Isom }{\mathrm{Isom}}
\newcommand {\vol }{\mathrm{vol}}
\newcommand {\Z}{\mathbb{Z}}
\newcommand {\R}{\mathbb{R}}
\newcommand {\GH}{\xrightarrow{GH}}
\newcommand {\spa}[1]{\langle{#1}\rangle}
\newcommand {\lap }{\Delta}
\newcommand {\grad }{\triangledown}
\newcommand {\norm}[1]{\Vert{#1}\Vert}
\newcommand {\Ric}{\mathrm{Ric}}
\newcommand {\myarrow}[1]{\mathop{\longrightarrow}\limits^{#1}}
\newcommand{\XXint}[3]{{
		\setbox0=\hbox{$#1{#2#3}{\int}$}
		\vcenter{\hbox{$#2#3$}}\kern-.5\wd0}}
\begin{document}

\maketitle
\begin{abstract}

In \cite{NPZ24}, Navarro–Pan–Zhu proved that the fundamental group of an open manifold with nonnegative Ricci curvature and linear volume growth contains a subgroup isomorphic to $\mathbb{Z}^k$ with finite index. They further asked whether the existence of a torsion-free element in the fundamental group forces the universal cover to split off an isometric $\mathbb{R}$-factor (Question 1.3 of \cite{NPZ24}).

In this article, we provide an affirmative answer to this question. Specifically, we prove that if an open manifold with nonnegative Ricci curvature has linear volume growth, then its universal cover is isometric to a metric product $\mathbb{R}^k \times N$, where $N$ is an open manifold with linear volume growth and $k$ is the integer such that $\pi_1(M)$ contains a $\mathbb{Z}^k$-subgroup of finite index. As a direct consequence, if the Ricci curvature is positive at some point, then the fundamental group is finite.

We also establish that for an open manifold $M$ with nonnegative Ricci curvature, if the infimum of its volume growth order is strictly less than $3$ and $\tilde{M}$ has Euclidean volume growth, then the universal cover $\tilde{M}$ splits off an $\mathbb{R}^{n-2}$-factor. As an application, if $M$ has first Betti number $b_1 = n-2$ and $\tilde{M}$ has Euclidean volume growth, then its universal cover admits such a splitting. This result provides a partial answer to \cite[Question 1.6]{PY24}.

	

\end{abstract}

\setcounter{tocdepth}{1}
\tableofcontents

\section{Introduction}

The celebrated Cheeger--Gromoll's splitting theorem states that if an open manifold with nonnegative Ricci curvature contains a line, then it splits isometrically as metric product $\mathbb{R} \times N$, where $N$ also has nonnegative Ricci curvature \cite{CG71}. An important application of this result is the following structure theorem for compact manifolds.

\begin{thm}\label{split-thm}
	Let $M$ be a compact manifold with nonnegative Ricci curvature. Then its universal cover $\tilde{M}$ splits isometrically as $\mathbb{R}^k \times N$, where $k$ is the integer such that $\pi_1(M)$ contains a normal $\Z^k$ with finite index, and $N$ is a compact manifold.
\end{thm}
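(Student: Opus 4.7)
The plan is to combine an iterated application of the Cheeger--Gromoll splitting theorem on the universal cover with Bieberbach's theorem on crystallographic groups. First, since $\tilde M$ inherits nonnegative Ricci curvature from $M$, I would apply the splitting theorem as many times as possible to write
\[
\tilde M \cong \mathbb{R}^k \times N,
\]
where $k$ is the maximal integer such that the remaining factor $N$ contains no isometrically embedded line. Then $N$ is a complete simply connected manifold with $\Ric \geq 0$ and no line. Because $N$ has no line, the Euclidean factor is canonically characterized (for instance as the span of all lines through a basepoint), and hence every isometry of $\tilde M$ respects the decomposition: $\Isom(\tilde M) \cong \Isom(\mathbb{R}^k) \times \Isom(N)$. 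Consequently each $\gamma \in \Gamma := \pi_1(M)$ decomposes as $\gamma = (\alpha(\gamma), \rho(\gamma))$ with $\alpha(\gamma) \in \Isom(\mathbb{R}^k)$ and $\rho(\gamma) \in \Isom(N)$.

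The key geometric step is to show $N$ is compact. Projecting a compact fundamental domain of $\Gamma$ in $\tilde M$ onto $N$ shows that $\rho(\Gamma) \subset \Isom(N)$ acts cocompactly on $N$. If $N$ were noncompact, the cocompact isometric action combined with $\Ric_N \geq 0$ would produce a new line in $N$: pick points $y_n \to \infty$ in $N$, take minimizing segments from a fixed basepoint to $y_n$, translate the midpoints back into a compact region via elements of $\rho(\Gamma)$, and extract a subsequential limit; a standard Busemann-function argument for manifolds with $\Ric \geq 0$ admitting cocompact isometry groups shows that the two ends combine into a line. This contradicts the maximality of $k$, so $N$ must be compact, and in particular $\Isom(N)$ is a compact Lie group.

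Finally, to extract the $\mathbb{Z}^k$ subgroup I would analyze the Euclidean projection $\alpha\colon \Gamma \to \Isom(\mathbb{R}^k)$. Its kernel $K$ acts freely and properly discontinuously on the compact manifold $N$ and is therefore finite. Compactness of $\Isom(N)$ together with the proper discontinuity of $\Gamma$ on $\tilde M$ implies $\bar\Gamma := \alpha(\Gamma)$ is discrete in $\Isom(\mathbb{R}^k)$, and the cocompactness of $\Gamma$ on $\tilde M$ descends to a cocompact action of $\bar\Gamma$ on $\mathbb{R}^k$; hence $\bar\Gamma$ is a crystallographic group. By Bieberbach's theorem, $\bar\Gamma$ contains a normal finite-index subgroup $T \cong \mathbb{Z}^k$ of pure translations. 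Pulling back, $\alpha^{-1}(T)$ is a finite-index normal subgroup of $\Gamma$ that is a finite extension of $\mathbb{Z}^k$ by $K$; picking a torsion-free $\mathbb{Z}^k$ subgroup of finite index and intersecting all its $\Gamma$-conjugates yields the desired normal $\mathbb{Z}^k$ of finite index in $\Gamma$. The main obstacle I expect is the compactness of $N$, which requires a careful use of maximality of $k$ to rule out the emergence of a new line from the cocompact quotient action.
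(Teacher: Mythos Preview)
The paper does not actually prove Theorem~\ref{split-thm}; it is stated in the introduction as the classical structure theorem for compact manifolds with nonnegative Ricci curvature, attributed to Cheeger--Gromoll \cite{CG71}, and serves only as motivation for the paper's main results on open manifolds. So there is no proof in the paper to compare against.

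That said, your proposal is the standard and correct proof of this classical result. The iterated splitting to obtain $\tilde M=\mathbb{R}^k\times N$ with $N$ line-free, the canonical nature of the Euclidean factor forcing $\Isom(\tilde M)=\Isom(\mathbb{R}^k)\times\Isom(N)$, the cocompactness of $\rho(\Gamma)$ on $N$ via projection of a fundamental domain, and the ``cocompact action on a noncompact nonnegatively Ricci-curved manifold produces a line'' argument are all valid and constitute the textbook approach. The final algebraic step---passing from the crystallographic quotient $\bar\Gamma$ to a normal $\mathbb{Z}^k$ in $\Gamma$ via Bieberbach and then intersecting conjugates---is also fine; the one point you gloss over is extracting a genuine $\mathbb{Z}^k$ from the finite-by-$\mathbb{Z}^k$ group $\alpha^{-1}(T)$, but this is a standard fact about finitely generated virtually-$\mathbb{Z}^k$ groups (for instance, pass to the finite-index subgroup on which the conjugation action on $K$ is trivial, then use that commutators in the resulting class-$2$ nilpotent group die on a finite-index sublattice).
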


Since $N$ is compact, the deck transformation group $\pi_1(M)$ acts isometrically and separately on $\mathbb{R}^k \times N$. That is, each $\gamma \in \pi_1(M)$ decomposes as $\gamma(x, y) = (\gamma_X(x), \gamma_Y(y))$ for all $(x,y) \in \mathbb{R}^k \times N$, where $\gamma_X \in \operatorname{Isom}(\mathbb{R}^k)$ and $\gamma_Y \in \operatorname{Isom}(N)$.

The compactness assumption in Theorem~\ref{split-thm} cannot be removed. For example, Nabonnand \cite{Nab80} constructed a doubly warped metric with positive Ricci curvature on $\mathbb{R}^3 \times S^1$ that exhibits quadratic volume growth. The universal cover of such a manifold cannot split off an $\mathbb{R}$-factor isometrically, since any such splitting would imply the vanishing of the Ricci curvature in the $\mathbb{R}$-direction at every point.

It is well known that an open (i.e., complete and non-compact) manifold $M$ with nonnegative Ricci curvature has at least linear volume growth \cite{Cal75,Yau76}. This means that there exist a point $p \in M$ and a constant $c > 0$ such that for all $r > 0$, the volume of the $r$-geodesic ball satisfies
\begin{equation}\label{Def}
	\operatorname{vol}(B_r(p)) \ge c r.
\end{equation}

Hence, in terms of volume growth rate, the class of open manifolds with nonnegative Ricci curvature that most closely approximates the compact case (which itself exhibits zero-order growth) consists of those with \emph{linear volume growth}. More precisely, if the lower bound in \eqref{Def} is also sharp in the sense that there exists a constant $C > 0$ such that $\operatorname{vol}(B_r(p)) \le C r$ holds for all $r > 0$, then we say $M$ has linear volume growth.

This class of manifolds, studied by Sormani in the late 1990s (see, e.g., \cite{Sor98,Sor99,Sor00}), shares several topological properties with the compact case. For instance, Sormani \cite{Sor99} proved that the fundamental group of such a manifold is finitely generated. More recently, Navarro, Pan, and Zhu \cite{NPZ24} showed that this fundamental group contains a free abelian subgroup $\mathbb{Z}^k$ of finite index.

In this article, our first main result generalizes Theorem~\ref{split-thm} to the case of manifolds with linear volume growth.

\begin{maintheorem}\label{Fibration}
	Let $M$ be an open manifold with nonnegative Ricci curvature. If $M$ has linear volume growth, then the universal cover $\tilde M$ splits isometrically as $\R^k\times N$, where $k$ is the integer such that $\pi_1(M)$ contains a normal $\Z^k$ with finite index and $N$ has linear volume growth.

\end{maintheorem}

This theorem provides a positive answer to \cite[Question 1.3]{NPZ24}, which asked whether the presence of a torsion-free element in the fundamental group forces the universal cover to split off an $\mathbb{R}$-factor.

\begin{rem}\label{rem1}
Since the space $N$ in Theorem~\ref{Fibration} may contain a line, the separate action of $\pi_1(M)$ on the product $\mathbb{R}^k \times N$ is less obvious than in the compact case. Nevertheless, with a little extra work, we can describe this group action as follows.
\begin{itemize}
	\item [(1)] The $\mathbb{R}^k$-factor of $\tilde{M}$ can be chosen so that the action of $\pi_1(M)$ on the product $\mathbb{R}^k \times N$ is separate: the $\mathbb{Z}^k$-subgroup acts freely by translations on the $\mathbb{R}^k$-factor, while the closure of the induced $\pi_1(M)$-action on $N$ is compact. Consequently, $M$ admits a finite cover $\hat{M}$ that is the total space of a fiber bundle over the $k$-torus $T^k$ with fiber diffeomorphic to $N$.
	\item [(2)] There exists an isometric decomposition $\tilde{M} = \mathbb{R}^{b_1} \times \mathbb{R}^{k-b_1} \times N$ and a normal subgroup $K \subset \pi_1(M)$ such that $\pi_1(M)/K \cong \mathbb{Z}^{b_1}$, where $b_1$ is the first Betti number of $M$. The group $K$ acts trivially on the $\mathbb{R}^{b_1}$-factor, while $\pi_1(M)/K$ acts freely by translations on the $\mathbb{R}^{b_1}$-factor. Moreover, the quotient $(\mathbb{R}^{k-b_1} \times N)/K$ has linear volume growth. Consequently, $M$ admits a fiber bundle structure over the torus $T^{b_1}$ with fiber diffeomorphic to $(\mathbb{R}^{k-b_1} \times N)/K$.

\end{itemize}
\end{rem}

In \cite{NPZ24}, it was also shown that the fundamental group is finite when the Ricci curvature is positive. As a direct corollary of Theorem \ref{Fibration}, we are able to relax this assumption, making the result more analogous to the compact case.
\begin{cor}\label{Finiteness}
	Let $M$ be an open manifold with nonnegative Ricci curvature and linear volume growth. If the Ricci curvature is positive at some point, then $\pi_1(M)$ is finite.
\end{cor}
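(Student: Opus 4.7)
The plan is to deduce the corollary directly from Theorem~\ref{Fibration}. By hypothesis, $M$ is open with nonnegative Ricci curvature and linear volume growth, so Theorem~\ref{Fibration} applies: the universal cover splits isometrically as $\tilde{M}\cong\mathbb{R}^k\times N$, where $k$ is the integer such that $\pi_1(M)$ contains a normal $\mathbb{Z}^k$ of finite index. Thus proving that $\pi_1(M)$ is finite is equivalent to proving $k=0$.

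Suppose for contradiction that $k\ge 1$. Then $\tilde{M}$ admits at least one isometric $\mathbb{R}$-factor. Let $\partial_t$ denote a unit parallel vector field tangent to this factor; by the product structure of the metric and the vanishing of all curvatures involving a parallel direction in a Riemannian product, we have
\begin{equation*}
\operatorname{Ric}_{\tilde{M}}(\partial_t,\partial_t)\equiv 0 \quad \text{on } \tilde{M}.
\end{equation*}
In particular, at every point of $\tilde{M}$ the Ricci tensor has a nontrivial null direction, so it fails to be positive definite.

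On the other hand, let $p\in M$ be a point where $\operatorname{Ric}_M$ is positive definite, and pick any preimage $\tilde{p}\in\tilde{M}$ of $p$ under the universal covering map $\pi\colon\tilde{M}\to M$. Since $\pi$ is a local isometry, $\operatorname{Ric}_{\tilde{M}}$ at $\tilde{p}$ agrees with $\operatorname{Ric}_M$ at $p$ under $d\pi$, hence is positive definite at $\tilde{p}$. This contradicts the existence of the null direction $\partial_t$ at $\tilde{p}$.

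Therefore $k=0$, which means the trivial subgroup has finite index in $\pi_1(M)$, so $\pi_1(M)$ is finite. The argument is essentially a one-step reduction to Theorem~\ref{Fibration}; there is no real obstacle beyond that, since the interaction between pointwise positivity of Ricci and isometric $\mathbb{R}$-splittings of the universal cover is standard (and is in fact the same mechanism used to rule out $\mathbb{R}$-splittings in Nabonnand's example cited in the introduction).
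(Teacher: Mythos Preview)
Your proof is correct and follows essentially the same approach as the paper: apply Theorem~\ref{Fibration}, then use positivity of Ricci at a point to rule out any isometric $\mathbb{R}$-factor, forcing $k=0$. The only minor difference is in the last step: you conclude finiteness directly from the ``$\pi_1(M)$ contains $\mathbb{Z}^k$ of finite index'' clause in Theorem~\ref{Fibration}, whereas the paper instead uses that $\tilde{M}=N$ has linear volume growth and bounds $|\Gamma(r)|$ via Lemma~\ref{|Gamma(r)|}; your route is slightly more direct.
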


The second corollary provides a geometric refinement of the result in \cite{NPZ24} on the vanished escape rate of $\pi_1(M,p)$, showing that the minimal representing loops are, in fact, contained in a bounded ball.

\begin{cor}\label{Bounded}
	Let $(M, p)$ be an open manifold with nonnegative Ricci curvature and linear volume growth. Then there exists $r>0$, such that all minimal geodesic loops at $p$ representing elements of $\pi_1(M, p)$ lie within $B_r(p)$.
\end{cor}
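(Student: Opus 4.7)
The plan is to combine the splitting from Theorem~\ref{Fibration} with the description of the deck-transformation action provided in Remark~\ref{rem1}(1). The key observation is that every minimal representing loop lifts to a minimizing geodesic in the product $\tilde M=\mathbb{R}^k\times N$, so its two components decouple: the $N$-component is automatically bounded because the induced action on $N$ has compact closure, while the Euclidean component can be pulled back into a fundamental domain using the lattice action. The uniform bound $r$ will then be a simple function of the lattice diameter and the radius of the $\pi_1(M)$-orbit on $N$.

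Concretely, I would fix a lift $\tilde p=(\tilde p_X,\tilde p_Y)\in\mathbb{R}^k\times N$ of $p$. By Remark~\ref{rem1}(1), the closure $\bar G\subset\Isom(N)$ of the induced $\pi_1(M)$-action on $N$ is compact, so the orbit $\bar G\cdot\tilde p_Y$ lies in some ball $B_R(\tilde p_Y)\subset N$; meanwhile the subgroup $\mathbb{Z}^k\subset\pi_1(M)$ acts on $\mathbb{R}^k$ by translations forming a lattice whose fundamental domain has some finite diameter $D$.

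Given $\gamma\in\pi_1(M,p)$ and a minimal geodesic loop $\sigma_\gamma$ representing $\gamma$, I would lift $\sigma_\gamma$ to a minimizing geodesic $\tilde\sigma_\gamma$ in $\tilde M$ from $\tilde p$ to $\gamma\cdot\tilde p$. Since the metric splits, $\tilde\sigma_\gamma(t)=(x(t),y(t))$ with $x$ a straight line in $\mathbb{R}^k$ and $y$ a minimizing geodesic in $N$ from $\tilde p_Y$ to $\gamma_Y(\tilde p_Y)$. Because $\gamma_Y(\tilde p_Y)\in\bar G\cdot\tilde p_Y\subset B_R(\tilde p_Y)$, the whole curve $y(\cdot)$ remains in $B_R(\tilde p_Y)$. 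For each $t$, I pick $\alpha=\alpha(t)\in\mathbb{Z}^k$ whose translation on $\mathbb{R}^k$ moves $x(t)$ to within distance $D$ of $\tilde p_X$. Because the action is separate and $\alpha_Y\in\bar G$, two applications of the triangle inequality give $d_N(\tilde p_Y,\alpha_Y y(t))\leq d_N(\tilde p_Y,\alpha_Y\tilde p_Y)+d_N(\alpha_Y\tilde p_Y,\alpha_Y y(t))\leq 2R$. Pushing the resulting bound $d_{\tilde M}(\tilde p,\alpha\cdot\tilde\sigma_\gamma(t))\leq\sqrt{D^2+4R^2}$ down along the Riemannian covering yields $d_M(p,\sigma_\gamma(t))\leq\sqrt{D^2+4R^2}$, independent of $\gamma$ and $t$, which furnishes the required $r$.

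The main (and essentially only) obstacle is ensuring that the $\mathbb{Z}^k$-action on $\tilde M$ is genuinely separate, so that the translation chosen to control the Euclidean coordinate does not disturb the control on the $N$-coordinate. This is precisely what Remark~\ref{rem1}(1) provides; once it is invoked no further analysis of $N$ is needed, and the uniform radius depends only on the lattice diameter $D$ and the orbit radius $R$, both of which are finite.
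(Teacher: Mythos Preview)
Your proposal is correct and follows essentially the same route as the paper's proof: both use the splitting $\tilde M=\mathbb{R}^k\times N$ from Theorem~\ref{Fibration} together with Remark~\ref{rem1}(1), observe that the $N$-component of any lifted minimal geodesic stays in a fixed ball because the induced action on $N$ has compact closure, and then use the cocompact action on $\mathbb{R}^k$ to bring each point of the lifted curve within a fixed distance of the orbit $\Gamma(\tilde p)$. The paper phrases the last step via the intermediate slice $V=\mathbb{R}^k\times\{q\}$ and the full group $\Gamma$, whereas you work directly with the $\mathbb{Z}^k$-lattice, but this is only a cosmetic difference.
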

In general, the combination of nonnegative Ricci curvature and linear volume growth is insufficient to guarantee that open manifolds of dimension at least~4 have finite topological type. Indeed, Menguy constructed examples of 4-dimensional open manifolds with positive Ricci curvature and bounded diameter growth—a condition that implies linear volume growth—yet which have infinite second Betti numbers \cite{Men00}. Nevertheless, our third corollary shows that in dimension $4$, an infinite fundamental group forces any manifold in this class to have finite topological type.

\begin{cor}\label{dim=4}
	Let $M$ be an open $4$-manifold with nonnegative Ricci curvature and linear volume growth. If $\pi_1(M)$ is infinite, then $M$ is homotopy equivalent to a closed submanifold.
\end{cor}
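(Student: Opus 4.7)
The plan is to combine Theorem~\ref{Fibration} with a low-dimensional classification of the transverse factor $N$ to produce a closed submanifold of $M$ onto which $M$ deformation retracts.

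Since $\pi_1(M)$ is infinite, the splitting $\tilde{M} = \mathbb{R}^k \times N$ furnished by Theorem~\ref{Fibration} has $k \geq 1$. Moreover $k \leq 3$: otherwise $N$ would be a point and the cocompact translation action of $\mathbb{Z}^4$ in Remark~\ref{rem1}(1) would force $M$ to be compact. Thus $\dim N = 4 - k \in \{1, 2, 3\}$, and $N$ is a simply connected open manifold with nonnegative Ricci curvature and linear volume growth.

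Next I would classify $N$ topologically. For $\dim N = 1$, clearly $N \cong \mathbb{R}$. For $\dim N = 2$, $N$ is a simply connected surface with nonnegative Gauss curvature, hence $N \cong \mathbb{R}^2$. For $\dim N = 3$, I would invoke G.~Liu's classification of open $3$-manifolds with nonnegative Ricci curvature to conclude $N \cong \mathbb{R}^3$ or $N \cong \mathbb{R} \times S^2$. In every case $N$ admits a natural compact deformation retract $C$ (a point, or an $S^2$-slice). Let $K$ be the compact closure of the $\pi_1(M)$-action on $N$ provided by Remark~\ref{rem1}(1). I would arrange $C$ to be $K$-invariant: in a Euclidean $N$ any compact subgroup of $\mathrm{Isom}(\mathbb{R}^d)$ has a fixed point by averaging; in $\mathbb{R} \times S^2$ the uniqueness of the de Rham decomposition forces $K$ to split as a product, and then a $K$-fixed height on the $\mathbb{R}$-factor yields the desired slice. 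The natural straight-line (or factor-contracting) retraction $H : N \times [0,1] \to N$ onto $C$ is then $K$-equivariant.

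Because Remark~\ref{rem1}(1) says the $\pi_1(M)$-action on $\tilde M$ is separate, the map $\mathrm{id} \times H$ defines a $\pi_1(M)$-equivariant deformation retraction of $\tilde{M}$ onto the closed invariant submanifold $\tilde{L} := \mathbb{R}^k \times C$. Since $\pi_1(M)$ acts freely on $\tilde M$, the quotient $L := \tilde{L}/\pi_1(M)$ is an embedded submanifold of $M$, and the descended retraction shows $M \simeq L$. To see that $L$ is closed (compact without boundary), observe that $\tilde{L}/\mathbb{Z}^k$ is a compact fiber bundle over $T^k$ with compact fiber $C$, and the further finite quotient by $\pi_1(M)/\mathbb{Z}^k$ preserves compactness; since $C$ has empty boundary so does $L$.

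The main obstacle is the 3-dimensional case of the topological classification of $N$, which requires Liu's theorem and is precisely what pins the argument to dimension $n = 4$: no analogous classification is available in higher dimensions, in line with the failure of finite topological type exhibited by Menguy's finite-$\pi_1$ examples. A more minor point is the case-by-case verification of $K$-equivariance of the contractions in the second step, which should be routine given the canonical nature of the retractions chosen above.
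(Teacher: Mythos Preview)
Your overall strategy is sound, and it is close in spirit to the paper's argument, but there is a genuine gap in the key step where you produce a $K$-equivariant contraction of $N$.

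The averaging argument you invoke (``in a Euclidean $N$ any compact subgroup of $\Isom(\R^d)$ has a fixed point'') and the ``straight-line retraction'' both presuppose that $N$ is \emph{isometric} to Euclidean space. But Liu's theorem only gives a \emph{diffeomorphism} $N\cong\R^3$; the Riemannian metric on $N$ need not be flat, so neither a barycentre nor straight lines are available. The same issue already arises when $\dim N=2$. In the cases $k\ge 2$, or $k=1$ with $N$ isometric to $\R\times S^2$, this is harmless: then $\tilde M$ splits off an $\R^2$-factor, so $M$ has nonnegative \emph{sectional} curvature and the Cheeger--Gromoll soul theorem immediately gives a closed submanifold onto which $M$ retracts, bypassing your construction entirely. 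The paper proceeds exactly this way.

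The case that genuinely needs work is $k=1$ with $N$ diffeomorphic (but not isometric) to $\R^3$ and containing no line. Here your equivariant-retraction argument does not go through as written; completing it would require showing that a compact smooth group action on $\R^3$ is smoothly linearizable, which is a deep consequence of 3-dimensional geometrization. The paper avoids this by an algebraic argument: since $N$ contains no line, $\Isom(N)$ is compact (Theorem~\ref{Isom}); the kernel of $\pi_1(M)\to\Z^{b_1}$ acts trivially on the $\R$-factor and hence freely on the contractible $N$, so this kernel is simultaneously finite (it embeds in the compact $\Isom(N)$ as a discrete subgroup) and torsion-free, hence trivial. This forces $\pi_1(M)\cong\Z$, and then $M$ is an $\R^3$-bundle over $S^1$, so $M\simeq S^1$. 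You should replace your equivariant-contraction step in this case by this argument.
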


An $n$-manifold $N$ is said to have Euclidean volume growth if there exist a point $q \in N$ and a constant $v > 0$ such that for all $r > 0$,
\[
\vol(B_r(q)) \ge v r^n.
\]

In \cite{Ye24}, Ye proved that for an open manifold $M$ with nonnegative Ricci curvature, if the infimum of its volume growth order is strictly less than $2$, then the universal cover $\tilde{M}$ has Euclidean volume growth if and only if $M$ is flat. He also provided an example demonstrating that this bound is sharp, as the conclusion fails for some manifolds with volume growth order exactly $2$. Our second main result establishes the following theorem, which addresses the case where the infimum of the volume growth order is strictly less than $3$. It demonstrates that, under the additional assumption that the universal cover has Euclidean volume growth, a strong rigidity conclusion on the geometry and topology of $M$ can still be obtained.

\begin{maintheorem}\label{Quadra}
	Let $M$ be an open $n$-manifold with nonnegative Ricci curvature. 
	Suppose the following conditions hold:
	\begin{itemize}
		\item The infimum of the volume growth order of $M$ is strictly less than $3$; 
		that is, there exist $p \in M$ and $\delta \in (0,1)$ such that
		\[
		\liminf_{r \to \infty} \frac{\vol(B_r(p))}{r^{3-\delta}} < \infty.
		\]
		
		\item The universal cover $\tilde{M}$ has Eulcidean volume growth.
	\end{itemize}
	Then $\tilde{M}$ splits off an $\mathbb{R}^{n-2}$-factor isometrically. In particular, $M$ has nonnegative sectional curvature and its soul is a flat manifold of dimension $n-1$ or $n-2$.
\end{maintheorem}

A application of Theorem \ref{Quadra} yields the following result on the first Betti number.
\begin{cor}\label{b_1>n-3}
	Let $M$ be an open $n$-manifold with nonnegative Ricci curvature. 
	If the first Betti number $b_1(M) \geq n-2$, then $\pi_1(M)$ is finitely generated. 
	If we further assume that the universal cover $\tilde{M}$ has Euclidean volume growth, 
	then the conclusion of Theorem \ref{Quadra} holds.
\end{cor}
Corollary~\ref{b_1>n-3} provides a partial answer to \cite[Question 1.6]{PY24}, which conjectured that the universal cover of an open manifold with nonnegative Ricci curvature and first Betti number $n-2$ must split off an $\mathbb{R}^{n-2}$-factor. 

\begin{rem}
	In an early version of this paper, we established the following result: if $M$ is an open $n$-manifold with nonnegative Ricci curvature, $b_1(M) \ge n-2$, and $M$ has nondegenerate quadratic volume growth—that is, there exists a point $p \in M$ such that
	\[
	\limsup_{r \to \infty} \frac{\vol(B_r(p))}{r^2} \in (0, \infty),
	\]
	then the conclusion of Theorem~\ref{Quadra} holds. Indeed, these assumptions imply that the universal cover $\tilde{M}$ has Euclidean volume growth, making this result a corollary of Corollary~\ref{b_1>n-3}.
\end{rem}

Finally, we provide an effective version of our main theorems. Let $N$ be an open manifold with nonnegative Ricci curvature and $\Gamma$ a closed subgroup of its isometry group $\operatorname{Isom}(N)$. Denote by $\Omega(N)$ (respectively, $\Omega(N, \Gamma)$) the set of asymptotic cones (respectively, equivariant asymptotic cones) of $N$ (respectively, of the pair $(N, \Gamma)$).

\begin{prop}\label{EffectiveVer.}
	Let $M$ be an open $n$-manifold with nonnegative Ricci curvature, and let $(\hat{M}, \hat{p}) \to (M, p)$ be a normal covering with deck transformation group $\Gamma$. 
	Suppose that there exists an integer $k$ such that:
	\begin{itemize}
		\item For every $(Y, y) \in \Omega(\hat M)$, $Y$ has a pole at $y$, and the dimension of the maximal Euclidean factor of $Y$ equals that of the tangent cone of $Y$ at $y$, with both being at most $k$.
		\item There exists some $(X, x, G) \in \Omega(\hat M, \Gamma)$ such that $X$ splits off an $\mathbb{R}^k$-factor on which $G$ acts cocompactly.
	\end{itemize}
	Further, suppose that either one of the following conditions holds:
	\begin{itemize}
		\item[(1)] $M$ has the infimum of volume growth order $<2$; that is, there exists $\delta \in (0,1)$ such that $M$ satisfies
		\begin{equation}\label{1+alpha}
			\liminf_{r \to \infty} \frac{\vol(B_r(p))}{r^{2-\delta}} < \infty.
		\end{equation}
		\item[(2)] $\hat M$ has Euclidean volume growth and $k = n-2$.
	\end{itemize}
	Then $\Gamma$ contains a $\Z^{k}$-subgroup with finite index, and $(\hat{M}, \hat{p})$ admits an isometric splitting of the form $(\mathbb{R}^k \times N, (0^k, q))$. 
	In case (1), the manifold $N$ satisfies condition~\eqref{1+alpha} with the base point $p$ replaced by some $q \in N$.
\end{prop}

To conclude this section, we outline the main ideas behind our proofs. Under the assumptions of our theorems, we establish the stability of maximal Euclidean factors in every asymptotic cone of a certain covering space, employing distinct approaches for each result. For Theorem~\ref{Fibration}, the key ingredient is the structure theorem for equivariant asymptotic cones of normal covers of $M$ with linear volume growth \cite[Corollary 3.51]{NPZ24}. For Theorem~\ref{Quadra}, the main technical tools are Pan's critical scaling argument \cite{Pan18} and the virtual abelian property of the fundamental group under the stable Euclidean orbit condition \cite{Pan21}. By \cite[Theorem 1.10]{HH24}, this stability implies that on the relevant covering space, harmonic functions of almost linear growth in fact exhibit asymptotically linear growth, with their dimension determined by that of the maximal Euclidean factor in the asymptotic cones. In Theorem~\ref{Fibration}, the slow volume growth of the base space allows us to apply a Liouville-type theorem to the subharmonic function obtained from the norm-square of the gradients of these harmonic functions, demonstrating that it must be constant. The desired splitting then follows from a standard splitting criterion. In Theorem~\ref{Quadra}, we apply the slicing theorem of Cheeger and Naber \cite{CN15} to these harmonic functions on the covering spaces.

This paper is organized as follows. Section 2 presents several lemmas that will be used throughout the paper. Section 3 is devoted to establishing properties of harmonic functions with asymptotically linear growth. The proof of Proposition~\ref{EffectiveVer.}(1) and its consequences—including Theorem~\ref{Fibration} and several corollaries—is contained in Section 4. Finally, Section 5 addresses the proof of Proposition~\ref{EffectiveVer.}(2) along with Theorem~\ref{Quadra} and its associated corollaries.

\section{Some Preparations}

\subsection{Conventions}
\begin{itemize}
	\item The letter $d$ denotes the distance function on the relevant spaces; when there is no risk of ambiguity, we omit explicit reference to the underlying space.
	\item The constants $C$ and $c$ represent positive numbers whose values may vary from line to line. 
	\item The notation $\GH$ denotes Gromov-Hausdorff convergence or its equivariant version, as determined by the context. 
	\item We say that a space splits off an $\mathbb{R}^k$-factor to indicate an isometric splitting, unless otherwise specified.
\end{itemize}

\subsection{Linear volume growth}

The following result is essentially due to Sormani \cite{Sor98,Sor99}.

\begin{thm}\label{Isom}
	Let $M$ be an open manifold with nonnegative Ricci curvature and linear volume growth. Then one of the following holds.
	\begin{itemize}
		\item [(1)]$M$ contains no line and every asymptotic cone is $(\R_{\ge0},0)$ and the isometry group of $M$, $\Isom (M)$ is compact.
		\item [(2)] $M$ is isometric to $\R\times N$ for some compact $N$.
	\end{itemize}
	
\end{thm}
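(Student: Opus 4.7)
The plan is to split into two cases according to whether $M$ admits a line.

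\textbf{Case 1: $M$ contains a line.} The Cheeger--Gromoll splitting theorem yields an isometric splitting $M = \R \times N$ with $N$ an open manifold of nonnegative Ricci curvature. To rule out noncompact $N$, I would use a Fubini-style volume estimate: the Calabi--Yau linear lower bound gives $\vol(B^N_{r/2}(q)) \geq cr/2$ for any noncompact $N$, whence
$$\vol(B_r((0, q))) \;\geq\; \vol\bigl([-r/2, r/2] \times B^N_{r/2}(q)\bigr) \;\geq\; c' r^2,$$
contradicting the linear volume growth of $M$. So $N$ is compact, placing us in case (2).

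\textbf{Case 2: $M$ contains no line.} The asymptotic cone conclusion is Sormani's result in \cite{Sor98,Sor99}: taking a ray $\gamma$ from $p$ and its Busemann function $b_\gamma$, linear volume growth forces the level sets of $b_\gamma$ to have uniformly bounded diameter, so rescaling and passing to any Gromov--Hausdorff subsequential limit of $(M, r_i^{-1}d, p)$ produces a one-dimensional space, which the no-line hypothesis pins down to $(\R_{\geq 0}, 0)$. For compactness of $\Isom(M)$ I would argue by contradiction: non-compactness together with properness of the $\Isom(M)$-action provides $\phi_i \in \Isom(M)$ with $r_i := d(p, \phi_i(p)) \to \infty$. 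Passing to an equivariant pointed Gromov--Hausdorff subsequential limit of $(M, r_i^{-1}d, p, \langle \phi_i \rangle)$, the base becomes $(\R_{\geq 0}, 0)$ by the asymptotic cone analysis, and the limit group is a closed subgroup of $\Isom(\R_{\geq 0}) = \{\id\}$, hence trivial. But $\phi_i(p)$ lies at distance $1$ from $p$ in the rescaled metric, so its limit is a point at distance $1$ from $0$ that must sit in the orbit of $0$ under the limit group; a trivial group has only singleton orbits, contradiction. Hence $\Isom(M)$ is compact.

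The principal technical input is Sormani's uniform diameter bound on Busemann level sets, which is carried out in the cited papers. The equivariant pointed Gromov--Hausdorff convergence step under a proper isometric action on a nonnegatively Ricci-curved space is standard (Fukaya--Yamaguchi type), and the rest of the argument is bookkeeping; I do not foresee a genuine obstacle beyond invoking the Sormani analysis correctly.
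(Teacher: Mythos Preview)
Your proposal is correct and follows essentially the same approach as the paper. The paper only writes out the compactness of $\Isom(M)$ in case (1), citing Sormani for the rest, and its argument is the same as yours: a noncompact isometry group yields $g_i$ with $r_i=d(g_i(p),p)\to\infty$, and after rescaling the limit isometry $g_\infty$ of $\R_{\ge0}$ would have to move $0$ to a point at distance $1$, contradicting $\Isom(\R_{\ge0})=\{\id\}$.
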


\begin{proof}
	We prove only the compactness of $\operatorname{Isom}(M)$ in case (1). Fix $p \in M$. If $\operatorname{Isom}(M)$ is not compact in case (1), then there exists a sequence $\{g_i\} \subset \operatorname{Isom}(M)$ such that $r_i := d(g_i(p), p) \to \infty$. After passing to a subsequence, we have the pointed Gromov-Hausdorff convergence
	\[
	(r_i^{-1}M, p, g_i) \xrightarrow{\mathrm{GH}} (\mathbb{R}_{\ge 0}, 0, g_\infty).
	\]
	It then follows that
	\[
	d(g_\infty(0), 0) = \lim_{i\to\infty} r_i^{-1}d(g_i(p), p) = 1,
	\]
	which leads to a contradiction.
\end{proof}

Essential to our proof of Theorem \ref{Fibration} is the following highly nontrivial theorem from \cite[Corollary 3.51]{NPZ24}.
\begin{thm}\label{NPZlem}
	Let $N$ be an open manifold with nonnegative Ricci curvature and linear volume growth. Let $\hat{N}$ be a normal cover of $N$ with deck transformation $\Gamma\cong\Z$. Then $\Omega(\hat{N},\Gamma)$ contains a unique element as one of the following:
	\begin{enumerate}
		\item $(\mathbb{R}^{2}, 0^2, \mathbb{R} \times \mathbb{Z}_{2})$, where $\Z_2$ fixes the origin $0^2$, 
		\item $(\mathbb{R}^{2}, 0^2, \mathbb{R})$,
		\item $(\mathbb{R} \times [0, \infty), (0,0), \mathbb{R})$.
	\end{enumerate}
\end{thm}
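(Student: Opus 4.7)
The plan is to use Theorem~\ref{Isom} to split the argument into two cases. If $N\cong\R\times N'$ with $N'$ compact, then $\pi_1(N)=\pi_1(N')$ and $\hat N$ splits as $\R\times\hat N'$, where $\hat N'$ is a $\Z$-cover of the compact factor; the equivariant asymptotic cone can then be computed directly, yielding an element of the list (in fact case~(2)). Henceforth I would focus on case~(1) of Theorem~\ref{Isom}: $N$ contains no line, every asymptotic cone of $N$ is $(\R_{\ge 0},0)$, and $\Isom(N)$ is compact.

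For any equivariant asymptotic cone $(Y,y,G)\in\Omega(\hat N,\Gamma)$, equivariance of the Gromov--Hausdorff limit gives $Y/G\cong(\R_{\ge 0},0)$, the asymptotic cone of $N=\hat N/\Gamma$. On the other hand, linear volume growth on $N$ together with a covering-count bound on the number of deck translates meeting $B_r(\hat p)$ gives $\vol(B_r(\hat p))\le Cr^2$. Combined with $\Ric\ge 0$, this forces the essential dimension of $Y$ to be at most~$2$, and since $\dim(Y/G)=1$, the principal $G$-orbits must be at most one-dimensional. Hence the identity component $G^\circ$ is either trivial or isomorphic to $\R$.

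Next I would classify $(Y,G)$ under these constraints. If $Gy$ is one-dimensional, it supplies a line through $y$, and the Cheeger--Gromoll splitting gives $Y=\R\times Z$; the identity $Y/G\cong\R_{\ge 0}$ then forces $Z=[0,\infty)$ with $G$ acting by translation on the first factor, yielding case~(3). If instead $G$ fixes $y$, an infinitesimal cone argument at the fixed point, combined with the two-dimensional upper bound, gives $Y\cong\R^2$; the quotient condition dictates that $G^\circ\cong\R$ act by rotations, giving case~(2), or case~(1) when $G$ additionally contains a reflection $\Z_2$ fixing $y$. The possibility of $G$ being purely discrete is ruled out because no discrete isometry group of such a two-dimensional $\Ric\ge 0$ limit can produce the quotient $\R_{\ge 0}$.

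The hard part is \emph{uniqueness}: different sequences $r_i\to\infty$ might a priori yield different models from the list. To address this I would exploit that linear volume growth on $N$ forces the displacement $d(\gamma_0^n\hat p,\hat p)$ of powers of a generator $\gamma_0\in\Gamma$ to grow essentially linearly in $n$ (via a pigeonhole argument on fundamental domains and the volume bound above), which normalizes the equivariant rescalings uniformly. Combined with a Colding--Naber style continuity-of-cones argument and the fact that each of the three models is rigid under small equivariant perturbations, this would upgrade the three-case classification to full uniqueness.
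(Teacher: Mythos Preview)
The paper does not prove this statement: it is quoted verbatim from \cite[Corollary 3.51]{NPZ24} and explicitly flagged as ``highly nontrivial''. So there is no in-paper proof to compare against; the relevant question is whether your sketch could stand on its own.

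Your classification of the \emph{possible} equivariant limits is plausible in outline but already imprecise in places. The step ``quadratic volume growth of $\hat N$ forces the essential dimension of $Y$ to be at most $2$'' does not by itself yield the metric-cone structure you need: asymptotic cones of manifolds with nonnegative Ricci and subeuclidean volume growth are not metric cones in general, so you cannot immediately invoke a two-dimensional cone classification. Likewise, the dichotomy ``$G(y)$ is one-dimensional'' versus ``$G$ fixes $y$'' is not exhaustive as stated, and the assertion that a one-dimensional orbit ``supplies a line through $y$'' requires showing the $\R$-orbit is a geodesic line rather than, say, a circle or a non-geodesic curve. The case ``$G$ fixes $y$ $\Rightarrow$ $Y\cong\R^2$ by an infinitesimal cone argument'' is a slogan, not an argument: nothing you have said rules out $Y$ being a non-flat two-dimensional cone or a more singular Ricci limit.

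The decisive gap, however, is uniqueness. Your proposal here is essentially a placeholder: the linear growth of $n\mapsto d(\gamma_0^n\hat p,\hat p)$ normalizes the \emph{orbit} scale but says nothing about the transverse geometry of $\hat N$, which is what distinguishes the three models. The appeal to ``Colding--Naber style continuity of cones'' is misplaced---that theory concerns \emph{tangent} cones along interior geodesics, not asymptotic cones, and there is no general connectedness result for $\Omega(\hat N,\Gamma)$ to invoke. Nor is ``each model is rigid under small equivariant perturbations'' true in any sense that would preclude different sequences $r_i\to\infty$ from landing in different models. In \cite{NPZ24} the uniqueness is obtained through a detailed analysis specific to the linear-volume-growth setting (escape-rate and diameter-growth control on $N$, together with structure results for the limit group action), and this is precisely the part your sketch does not supply.
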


\subsection{Growth of orbits for the deck transformation group}
Let $(\hat N,\hat p)\to(N,p)$ be a Riemannian normal cover with deck transformation $\Gamma$. Define $$\Gamma(r):=\{\gamma\in\Gamma|d(\hat p,\gamma(\hat p))\le r\}.$$

The following lemma indicates that the growth of $|\Gamma(r)|$ (the number of elements in $\Gamma(r)$) is controlled by the volume ratio. We refer to \cite{An90-II} for the proof.

\begin{lem}\label{|Gamma(r)|}
	
	If $\Ric\ge0$, then
	$$\frac1{2^n}\frac{\vol(B_{r}(\hat p))}{\vol (B_{ r}(p))}\le|\Gamma(r)|\le2^n\frac{\vol(B_{r}(\hat p))}{\vol (B_r(p))}.$$ Consequently, for $R\ge r>0$, $$1\le \frac{|\Gamma(R)|}{|\Gamma(r)|}\le4^n \left(\frac{R}r\right)^n.$$
	
\end{lem}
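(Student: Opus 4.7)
My plan is to recover the classical argument, attributed to Anderson in \cite{An90-II}, by combining the Bishop--Gromov volume comparison on $\hat N$ with the Dirichlet fundamental domain of $\Gamma$ based at $\hat p$. Let $\pi \colon \hat N \to N$ denote the covering map and set
\[
F := \{\hat x \in \hat N : d(\hat x, \hat p) \le d(\hat x, \gamma \hat p) \text{ for all } \gamma \in \Gamma\}.
\]
The decisive observation is that $\pi$ preserves the distance to the basepoint on $F$: for $\hat x \in F$, a minimizing geodesic from $\hat x$ to $\hat p$ projects under $\pi$ to a minimizing geodesic from $\pi(\hat x)$ to $p$, so $d(\hat p, \hat x) = d(p, \pi(\hat x))$. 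Combined with path lifting, this yields the identity $\vol(F \cap B_r(\hat p)) = \vol(B_r(p))$ for every $r > 0$, which is the workhorse of both estimates.

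For the upper bound $|\Gamma(r)| \le 2^n \vol(B_r(\hat p))/\vol(B_r(p))$, I would note that the family $\{\gamma(F \cap B_r(\hat p))\}_{\gamma \in \Gamma(r)}$ is pairwise disjoint up to measure zero and lies entirely in $B_{2r}(\hat p)$ by the triangle inequality; summing volumes and applying Bishop--Gromov on $\hat N$ then gives $|\Gamma(r)|\vol(B_r(p)) \le \vol(B_{2r}(\hat p)) \le 2^n \vol(B_r(\hat p))$. For the matching lower bound, any $\hat x \in B_r(\hat p)$ factors as $\hat x = \gamma_0 \hat y$ with $\hat y \in F$, and the defining inequality of $F$ forces $d(\hat p, \hat y) \le d(\hat p, \hat x) \le r$, whence $d(\hat p, \gamma_0 \hat p) \le 2r$. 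Hence $B_r(\hat p) \subset \bigcup_{\gamma \in \Gamma(2r)} \gamma(F \cap B_r(\hat p))$, giving $\vol(B_r(\hat p)) \le |\Gamma(2r)|\vol(B_r(p))$. Replacing $r$ by $r/2$ and using Bishop--Gromov in the form $\vol(B_{r/2}(\hat p)) \ge 2^{-n}\vol(B_r(\hat p))$ together with the trivial inequality $\vol(B_{r/2}(p)) \le \vol(B_r(p))$ produces the stated lower bound on $|\Gamma(r)|$.

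The ratio estimate is then arithmetic: the inclusion $\Gamma(r) \subset \Gamma(R)$ for $R \ge r$ supplies the lower bound $1$, while dividing the upper bound at scale $R$ by the lower bound at scale $r$, using Bishop--Gromov on $\hat N$ to control $\vol(B_R(\hat p))/\vol(B_r(\hat p)) \le (R/r)^n$ and the trivial estimate $\vol(B_r(p)) \le \vol(B_R(p))$, yields $|\Gamma(R)|/|\Gamma(r)| \le 4^n(R/r)^n$. The only step requiring genuine care is the verification that $F$ is a measurable fundamental domain whose boundary is a null set and whose translates tile $\hat N$; in the smooth Riemannian setting this is standard, but it is the point at which the argument silently uses the manifold structure and the fact that the equidistant locus $\{d(\cdot,\hat p)=d(\cdot,\gamma\hat p)\}$ is a codimension-one smooth hypersurface inside each compact region.
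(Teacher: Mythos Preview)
Your argument is correct and is exactly the classical Anderson argument that the paper defers to via the citation \cite{An90-II}; the paper itself gives no independent proof of this lemma. One small imprecision: the equidistant set $\{d(\cdot,\hat p)=d(\cdot,\gamma\hat p)\}$ need not be a smooth hypersurface even locally, but it is still a null set (e.g.\ as a level set of a Lipschitz function with nonvanishing gradient almost everywhere), which is all the argument requires.
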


For a group $\Gamma$ isomorphic to $\mathbb{Z}^s$, its orbit growth is at least of polynomial order $s$.

\begin{lem}\label{18:09}
	If $\mathrm{Ric} \ge 0$ and $\Gamma \cong \mathbb{Z}^s$, then there exist constants $c, C > 0$ such that for all sufficiently large $r > 0$,
	\[
	|\Gamma(r)| \ge c r^s \quad \text{and} \quad \vol(B_r(p)) \le C r^{n-s}.
	\]
\end{lem}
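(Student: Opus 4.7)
The plan is to first establish a lower bound $|\Gamma(r)|\ge cr^s$ by an elementary group-theoretic argument, and then combine it with Lemma~\ref{|Gamma(r)|} and the Bishop--Gromov inequality to deduce the volume upper bound. For the orbit bound, I would fix generators $\gamma_1,\ldots,\gamma_s$ of $\Gamma\cong\Z^s$ and set $L:=\max_{1\le i\le s}d(\hat p,\gamma_i(\hat p))$. For each multi-index $(a_1,\ldots,a_s)\in\Z^s$ with $|a_i|\le k$, the triangle inequality gives
\[
d\bigl(\hat p,\gamma_1^{a_1}\cdots\gamma_s^{a_s}(\hat p)\bigr)\le \sum_{i=1}^s |a_i|\cdot d(\hat p,\gamma_i(\hat p))\le ksL.
\]
Because $\Gamma$ acts freely on $\hat N$ as a deck group, distinct tuples produce distinct group elements, so $|\Gamma(ksL)|\ge(2k+1)^s$. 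Taking $k=\lfloor r/(sL)\rfloor$ for $r\ge sL$ yields $|\Gamma(r)|\ge cr^s$ for all sufficiently large $r$ and some constant $c=c(s,L)>0$.

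For the volume bound, Lemma~\ref{|Gamma(r)|} provides
\[
\vol(B_r(p))\le \frac{2^n\vol(B_r(\hat p))}{|\Gamma(r)|}.
\]
Since $\hat N$ inherits $\Ric\ge 0$ from $N$, the Bishop--Gromov inequality gives $\vol(B_r(\hat p))\le \omega_n r^n$, where $\omega_n$ denotes the volume of the Euclidean unit $n$-ball. Substituting the lower bound $|\Gamma(r)|\ge cr^s$ then produces $\vol(B_r(p))\le Cr^{n-s}$ for some $C>0$ and all sufficiently large $r$, which finishes the proof.

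I do not anticipate a serious obstacle in this argument. The only minor subtlety is ensuring that the polynomial orbit bound holds for all large $r$, and not merely along the discrete sequence $r=ksL$; but the floor-function estimate above handles this cleanly. Implicit in the argument is the fact that $\Gamma$ acts freely on $\hat N$, so that distinct elements of $\Gamma$ inject into the orbit of $\hat p$, which comes for free from $\hat N\to N$ being a Riemannian covering.
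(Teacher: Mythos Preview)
Your proposal is correct and follows essentially the same approach as the paper: fix generators of $\Gamma\cong\Z^s$, use the triangle inequality to bound $d(\hat p,\gamma_1^{a_1}\cdots\gamma_s^{a_s}(\hat p))$ and thereby obtain $|\Gamma(r)|\ge cr^s$, then combine Lemma~\ref{|Gamma(r)|} with Bishop--Gromov to deduce the volume bound. If anything, your write-up is slightly more careful than the paper's (you address the floor-function issue and the injectivity of the map $\Z^s\to\Gamma(\hat p)$ explicitly), but the ideas are identical.
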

\begin{proof}
	Let $\{\gamma_1,...\gamma_{s}\}$ be a generator of $\Z^{s}$. Note that for any $\gamma=\gamma_1^{k_1}...\gamma_{s}^{k_{s}}$, $$\|\gamma\|\le k_1\|\gamma_1\|+...+k_{s}\|\gamma_{s}\|\le s\max\{|k_i|\}\max\{\|\gamma_i\|\},$$where $\|\gamma\|:=d(\gamma(\hat p),\hat p)$. This implies for any large $r$,
	\begin{equation*}
		|\Gamma(r)|\ge \left(\frac{r}{s\max\{\|\gamma_i\|\}}\right)^{s}.
	\end{equation*}
	
	The latter conclusion is by combining the above estimate with the volume comparison and Lemma \ref{|Gamma(r)|}.
\end{proof}

\subsection{Stability lemma for maximal orbits}

We now turn to the proof of Lemma~\ref{CriticalScaling}, which concerns a specific stability property of equivariant asymptotic cones. The argument commences by recalling two elementary facts in Euclidean geometry.

\begin{lem}\label{R^kaction}
	Let $G$ be a closed nilpotent subgroup of $\Isom (\R^k)$. If $\R^k/G$ is compact, then $G$ acts on $\R^k$ by translations. 
\end{lem}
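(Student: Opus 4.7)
Write $\Isom(\R^k) = \R^k \rtimes \Ort(k)$ and let $\rho : \Isom(\R^k) \to \Ort(k)$ be the rotational projection. Set $T := G \cap \ker\rho$, the closed normal subgroup of pure translations in $G$. My strategy is to prove $G = T$ in two steps: (i) show that $T$ is cocompact in $\R^k$, equivalently $\mathrm{span}_{\R} T = \R^k$; then (ii) use the nilpotence of $G$ via an iterated commutator computation to force $\rho(g) = I$ for every $g \in G$.

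Assuming (i), Step (ii) is a short commutator computation. For $g \in G$ with $A := \rho(g)$ and any $t_v \in T$, one has $g t_v g^{-1} = t_{Av}$ and hence $[g, t_v] = t_{(A-I) v}$. Induction gives that the $n$-fold iterated bracket $[g, [g, \ldots, [g, t_v]\ldots]]$ equals $t_{(A-I)^n v}$. Nilpotence of $G$ of some class $c$ forces this to vanish once $n > c$, so $(A-I)^n v = 0$ for every $v \in T$. By Step (i), $T$ spans $\R^k$, so $(A-I)^n = 0$ as an endomorphism, i.e., $A$ is unipotent; being simultaneously orthogonal (hence $\C$-diagonalizable with eigenvalues of modulus one), $A$ must equal the identity, completing the proof.

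The main obstacle is Step (i), a Bieberbach-type statement that closed cocompact subgroups of $\Isom(\R^k)$ have cocompact translation subgroup. My plan is to argue by contradiction. Assuming $W := \mathrm{span}_{\R} T \subsetneq \R^k$, the identity $g t_v g^{-1} = t_{\rho(g) v}$ shows that $\rho(G)$ preserves $T$, hence preserves $W$ and, by orthogonality, also the nonzero complement $W^\perp$. Consequently the $G$-action descends to a cocompact nilpotent action $\bar G$ on the quotient $Q := \R^k/W$. The goal is to show that the pure-translation subgroup $\bar T$ of $\bar G$ in $Q$ vanishes: once $\bar T = \{0\}$, the group $\bar G$ embeds into the compact group $\Ort(Q)$ and so cannot act cocompactly on the nontrivial Euclidean space $Q$ (since any compact isometry group fixes a point and has bounded orbits), yielding the desired contradiction.

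To verify that $\bar T$ is trivial, I plan to exploit the nontrivial center $Z(G)$ afforded by the nilpotence of $G$. For any $z = (A_z, b_z) \in Z(G)$, commutation with every $t_v \in T$ yields $(A_z - I) v = 0$, so $A_z$ acts trivially on $W = \mathrm{span} T$. An induction on $k$ (or on the nilpotency class), applied to $\bar G$ acting on $W^\perp$ and combined with this centralizer constraint—together with the observation that the closure $\overline{\rho(G)} \subset \Ort(k)$ is a compact nilpotent subgroup whose identity component is therefore a torus—should then force $\bar T$ to vanish and close the argument. The technically delicate point, and the principal obstacle, is ensuring that passing to the quotient $Q$ does not create new spurious translations in $\bar G$ beyond those already accounted for by the central elements above.
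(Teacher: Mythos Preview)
Your Step~(ii) is correct and is in fact slightly slicker than the paper's final step: the paper notes that an isometry commuting with $k$ linearly independent translations is a translation, whereas your iterated-commutator computation $[g,[g,\dots,[g,t_v]\dots]]=t_{(A-I)^n v}$ extracts the same conclusion directly from nilpotence plus the orthogonality of $A$.

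The gap is in Step~(i). Two issues:

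\emph{First}, the implication ``$\bar T=\{0\}\Rightarrow\bar G$ embeds into $\Ort(Q)$ and hence cannot act cocompactly'' is not valid as written. What $\bar T=\{0\}$ gives is that the rotational projection $\rho\colon\bar G\to\Ort(Q)$ is \emph{injective}; it does not make $\bar G$ compact, nor does it force bounded orbits. A single screw motion $g=(R_\theta,v)$ in $\Isom(\R^2)$ with $\theta/\pi$ irrational and $v\neq 0$ generates a closed abelian group with $\rho$ injective yet unbounded orbits. So even after you prove $\bar T=\{0\}$ (which is indeed true, and follows by applying your Step~(ii) argument on the subspace $W$ to see $A|_W=I$, hence $A=I$, hence $g\in T$), you have not yet contradicted cocompactness of $\bar G$ on $Q$.

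\emph{Second}, the proposed induction on $k$ does not reduce dimension when $W=\{0\}$, i.e.\ when $T=\{e\}$; and the alternative induction on nilpotency class needs the abelian base case, which is itself the nontrivial ``fundamental fact'' one is trying to avoid.

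The paper sidesteps both problems by quoting two external inputs: $[G:C(G)]<\infty$ for closed nilpotent $G\le\Isom(\R^k)$ (from \cite{Pan22-II}), and the abelian case (abelian cocompact $\Rightarrow$ translations). These yield $k$ linearly independent translations in $C(G)$, after which the argument is exactly your Step~(ii). If you want your route to be self-contained, the missing piece is to show that a closed nilpotent cocompact $G\le\Isom(\R^k)$ with $k\ge 1$ contains a nontrivial pure translation; one way is to take $z=(A_z,b_z)\in Z(G)\setminus\{e\}$ and observe that if $A_z\neq I$ then the relation $(A_z-I)b=(A-I)b_z$ bounds the $\mathrm{im}(A_z-I)$-component of every $b$, contradicting cocompactness---so $A_z=I$ and $z\in T$. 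Iterating then feeds your induction.
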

\begin{proof}
	The center of $G$, denoted by $C(G)$, is a finite-index subgroup of $G$ (see \cite[Lemma 1.4]{Pan22-II} for a proof). It follows that the quotient $\mathbb{R}^k / C(G)$ is compact. A fundamental fact is that any abelian cocompact subgroup of $\operatorname{Isom}(\mathbb{R}^k)$ acts by translations on $\mathbb{R}^k$. Consequently, $C(G)$ acts on $\mathbb{R}^k$ by translations. Moreover, an elementary fact in linear algebra states that an isometry of $\R^k$ which commutes with $k$ linearly independent translations is itself a translation. Hence, it follows that $G$ also acts by translations.
\end{proof}

\begin{lem}\label{R^kcocompact}
	Let $K$ be a closed subgroup of $\operatorname{Isom}(\mathbb{R}^k)$. If the orbit $K(0^k)$ is not contained in any half-space, i.e., there exists no unit vector $v \in \mathbb{R}^k$ such that $v \cdot x \ge 0$ for all $x \in K(0^k)$, then the quotient space $\mathbb{R}^k / K$ is compact.
\end{lem}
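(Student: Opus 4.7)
The plan is to prove the contrapositive: assuming $\R^k/K$ is not compact, I will exhibit a nonzero $p \in \R^k$ with $x \cdot p \ge 0$ for every $x$ in the orbit $V := K(0^k)$.

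The first structural reduction analyzes the pure translation subgroup $T := K \cap \R^k$. As a closed subgroup of $\R^k$, $T$ has the form $\R^a \oplus \Z^b$ and is cocompact in the linear subspace $W := \mathrm{span}(T)$. Since $T$ is the kernel of the linear projection $K \to O(k)$, it is normal in $K$, so the linear image $K_0 \subset O(k)$ preserves the orthogonal decomposition $\R^k = W \oplus W^\perp$. If $W = \R^k$, then $\R^k/T$---and hence $\R^k/K$---is compact, contradicting our assumption. Therefore $W \subsetneq \R^k$ and $W^\perp \neq \{0\}$.

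Next I descend to an induced isometric action on $W^\perp$: send each $g = (A, v) \in K$ to $\bar g := (A|_{W^\perp}, v_\perp) \in \Isom(W^\perp)$, where $v_\perp$ is the $W^\perp$-component of $v$. This is well defined because two elements of $K$ with the same linear part differ by an element of $T \subset W$ and hence have identical $W^\perp$-components. Moreover, $\R^k/K$ is compact if and only if $W^\perp/\bar K$ is, since $W/T$ is a compact torus. The induced homomorphism factors through $K/T \cong K_0$; when $K_0$ is closed in $O(k)$ (the typical case), it is compact and so is $\bar K \subset \Isom(W^\perp)$. Any compact subgroup of a Euclidean isometry group fixes a point---for example, the Chebyshev center of any bounded orbit is invariant---producing $p \in W^\perp$ with $\bar g(p) = p$ for every $g \in K$.

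Unpacking the fixed-point equation gives $v_\perp = p - A|_{W^\perp} p$, and therefore
\[
v \cdot p \;=\; v_\perp \cdot p \;=\; |p|^2 - \langle A|_{W^\perp} p,\, p \rangle \;\ge\; 0
\]
by Cauchy--Schwarz together with the orthogonality of $A|_{W^\perp}$. Hence $V \subset \{x \in \R^k : x \cdot p \ge 0\}$. If $p \neq 0$ this is a proper half-space and we are done. If $p = 0$, the equation forces $v_\perp = 0$ for every $(A, v) \in K$, so $V \subset W \subsetneq \R^k$, and any nonzero vector in $W^\perp$ defines a supporting half-space. The main subtlety is the step asserting that $K_0$---and thus $\bar K$---is closed, which can fail (already visible for a screw motion with irrational angle). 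In that case one passes to the compact closure $\overline{K_0}$ and analyzes the cocycle $A \mapsto v_\perp$ via an averaging argument: on a compact group this cocycle becomes a coboundary $v_\perp = Ap - p$, and the orbit projection onto $W^\perp$ lies on a sphere through the origin, which is again contained in a half-space. Combining the two cases, the required half-space always exists, contradicting the hypothesis.
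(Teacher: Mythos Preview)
Your argument is correct when $K_0$ is closed in $O(k)$, but the sketched handling of the non-closed case has a genuine gap. The cocycle $c\colon K_0 \to W^\perp$, $A \mapsto v_\perp$, need not extend continuously to $\overline{K_0}$, so no averaging over $\overline{K_0}$ is available, and the assertion that $c$ becomes a coboundary is false in general. Take $k = 3$ and let $K$ be the closed cyclic group generated by the screw motion $g = (R_\theta, e_3)$, where $R_\theta$ rotates the $xy$-plane through an angle that is an irrational multiple of $\pi$ and fixes $e_3$. Then $T = K\cap\R^3=\{0\}$, $W = \{0\}$, $W^\perp = \R^3$, and $c(R_{n\theta}) = n e_3$. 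This cocycle is unbounded, hence does not extend to $\overline{K_0}\cong SO(2)$, and it is not a coboundary since $R_{n\theta}p - p$ always lies in the $xy$-plane. The orbit $K(0^3) = \{n e_3:n\in\Z\}$ does not lie on any sphere. Your decomposition is too coarse here: the genuine translation direction $e_3$ is invisible to $T = K \cap \R^k$. A correct structural proof along your lines would have to iterate, extracting further translation directions from the non-compactness of $\bar K$; this is substantially more than what you sketched.

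By contrast, the paper's proof is a four-line metric argument that avoids all structure theory: if $\R^k/K$ is noncompact, choose a unit-speed ray $\omega$ in this complete locally compact length space from $[0^k]$, lift it to a ray $\bar\omega$ in $\R^k$ from $0^k$, and note that $d(\bar\omega(s), K(0^k)) = s$ for all $s \ge 0$. Expanding $\|x - s\bar\omega(1)\|^2 \ge s^2$ and letting $s \to \infty$ yields $x \cdot \bar\omega(1) \le 0$ for every $x \in K(0^k)$, so $-\bar\omega(1)$ is the desired normal.
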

\begin{proof}
	Argue by contradiction. If $\R^k/K$ is not compact. There exists a unit speed ray $\omega:[0,\infty)\to\R^k/K$ starting from $[0^k]$, where $[0^k]$ is the image of $0^k$. We lift $\omega$ to a ray $\bar\omega$ in $\R^k$ emanating from $0^k$. For any $s\in[0,\infty)$, $$s=\|\bar\omega(s)-0^k\|=d(\omega(s),[0^k])=d(\bar \omega(s),K(0^k)).$$That is, for any $x\in K(0^k)$ $$\|x-s\bar\omega(1)\|\ge s$$ for any $s$, which implies $K(0^k)$ is contained in the half space $\{x\in\R^k|x\cdot(-\bar\omega(1))\ge0\}$. This is a contradiction.
\end{proof}

\begin{lem}\label{CriticalScaling}
	
	Let $M$ be an open manifold with nonnegative Ricci curvature, and let $\Gamma$ be a closed nilpotent subgroup of $\operatorname{Isom}(M)$. Suppose there exists an integer $k$ such that:
	\begin{itemize}
		\item For every $(Y, y) \in \Omega(M)$, $Y$ has a pole at $y$, and the dimension of the maximal Euclidean factor of $Y$ equals that of the tangent cone of $Y$ at $y$, with both being at most $k$.
		
		\item There exists some $(X,x,G)\in\Omega(M,\Gamma)$ such that $X$ splits off an $\mathbb{R}^{k}$-factor on which $G$ acts cocompactly.
	\end{itemize}
	Then, for every $(Y, y, H) \in \Omega(M, \Gamma)$, the orbit $H(y)$ is isometric to $\R^k$.
\end{lem}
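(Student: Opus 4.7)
The plan is to prove this by contradiction using a critical scaling argument in the spirit of Pan, exploiting the rigidity supplied by Lemmas \ref{R^kaction} and \ref{R^kcocompact}.

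I would begin by analyzing the distinguished cone $(X,x,G)$. Since $X$ splits off an $\R^k$-factor and the hypothesis bounds the maximal Euclidean factor of $X$ by $k$, we must have $X=\R^k\times X'$ with $X'$ containing no Euclidean line. The limit group $G$ inherits closedness and nilpotency from $\Gamma$ via equivariant Gromov--Hausdorff convergence, and its projection to $\Isom(\R^k)$ is closed, nilpotent, and cocompact. Lemma \ref{R^kaction} then shows that this projection acts by pure translations, so its orbit through $0$ fills all of $\R^k$. Combined with the de Rham decomposition $\Isom(X)=\Isom(\R^k)\times\Isom(X')$ (valid since $X'$ has no Euclidean factor) and the rigidity of flat orbits in product spaces, one concludes $G(x)\cong\R^k$, establishing the conclusion at the base cone.

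Next, suppose for contradiction that there exists $(Y^{\ast},y^{\ast},H^{\ast})\in\Omega(M,\Gamma)$ with $H^{\ast}(y^{\ast})\not\cong\R^k$, and write $Y^{\ast}=\R^{\ell^{\ast}}\times Z^{\ast}$ with $\ell^{\ast}$ the dimension of the maximal Euclidean factor. Then either $\ell^{\ast}<k$ outright, or $\ell^{\ast}=k$ but the projection $H^{\ast}_1\subset\Isom(\R^k)$ fails to act cocompactly; by Lemma \ref{R^kcocompact}, in the latter case the orbit $H^{\ast}_1(0)$ lies in a closed half-space of $\R^k$. In either case, the orbit $H^{\ast}(y^{\ast})$ exhibits a quantitative deficit from the full $\R^k$-cocompact structure realized at $(X,x,G)$.

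The main step is the critical scaling itself. Let $r_i\to\infty$ and $s_j\to\infty$ be the scales producing $(X,x,G)$ and $(Y^{\ast},y^{\ast},H^{\ast})$ respectively. Using the continuity of the map $r\mapsto(r^{-1}M,p,\Gamma)$ in pointed equivariant Gromov--Hausdorff topology, together with a bisection argument applied to a scalar-valued functional measuring the ``deficit from $\R^k$-cocompactness'' of the $\Gamma$-orbit inside a fixed ball, I would construct a sequence of critical scales $t_m\to\infty$ interleaving with both $r_i$ and $s_j$. After extracting a convergent subsequence, $(t_m^{-1}M,p,\Gamma)\to(W,w,K)\in\Omega(M,\Gamma)$, where $(W,w,K)$ is marginal: it fails to host a cocompact $\R^k$-orbit, while arbitrarily nearby rescalings do. The contradiction is then extracted from the tangent cone at $w$: the hypothesis applied to $(W,w,K)$ forces the maximal Euclidean factors of $W$ and $T_w W$ to have the same dimension, but marginality identifies $T_w W$ with a sub-limit at an adjusted scale that must exhibit the cocompact $\R^k$-structure of $(X,x,G)$, so $T_w W$ splits off $\R^k$ while $W$ itself does not. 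This strict inequality of Euclidean-factor dimensions contradicts the matching hypothesis. I expect the main obstacle to be setting up the deficit functional so that its continuity under eGH perturbations is robust, and identifying the tangent cone of the critical limit with an eGH-limit at an adjusted scale in $M$—a step that leans crucially on the pole structure of $W$ at $w$.
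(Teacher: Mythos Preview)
Your overall strategy---critical scaling in the style of Pan---matches the paper's, but the contradiction mechanism you propose has a genuine gap. You want to argue that the critical limit $(W,w,K)$ fails to split off $\R^k$ while its tangent cone $T_wW$ does, violating the matching hypothesis. This breaks down in the decisive case where $W$ \emph{does} split off $\R^k$ but the orbit $K(w)$ is a proper subset such as $\R^\ell\times\Z^{k-\ell}$ with $\ell<k$: here $W$ and $T_wW$ both split off exactly $\R^k$, so there is no Euclidean-factor mismatch. Worse, blowing up $(W,w,K)$---which is what taking the tangent cone does---makes the $\Z^{k-\ell}$ part \emph{sparser}, so the limiting orbit degenerates to $\R^\ell$, not $\R^k$; your claim that $T_wW$ ``must exhibit the cocompact $\R^k$-structure'' is therefore unsupported. (A smaller gap: in your first step, a cocompact nilpotent translation action does \emph{not} force the orbit to fill $\R^k$---a lattice $\Z^k$ is cocompact. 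The paper handles this with an extra blow-down to upgrade cocompact to transitive.)

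The missing idea is to work with the pointed GH-distance of the \emph{orbit} $\Gamma(p)$ to the class of genuine $\R^k$-orbits, and to take the critical scale $t_i$ inside the ``good'' set but near its infimum, so that the critical limit orbit $H_1(y_1)$ is itself close to $\R^k$. The pole hypothesis then traps $H_1(y_1)$ in the Euclidean slice, forcing that slice to be $k$-dimensional, and Lemmas~\ref{R^kcocompact} and~\ref{R^kaction} yield $H_1(y_1)\cong\R^\ell\times\Z^{k-\ell}$. The key trick you are missing is to \emph{shrink} rather than blow up: for small $\delta\in(0,1)$ the rescaled orbit $\delta H_1(y_1)$ has a \emph{finer} lattice part and is therefore even closer to $\R^k$, so $\delta t_i$ lies in the good set as well---contradicting the infimum choice unless $t_i$ stays bounded, which forces $(Y_1,y_1,H_1)$ to be a bounded rescaling of the assumed bad cone $(Y,y,H)$ and gives the final contradiction directly.
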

\begin{proof}

 For this $(X,x,G)$, a blowing-down argument yields that there exists $(X',x',G')\in\Omega(M,\Gamma)$ such that $X'$ splits off an $\mathbb{R}^{k}$-factor on which $G'$ acts transitively. To simplify notation, we continue to write $(X,x,G)$ for this new triple and assume that $G$ acts transitively on the $\mathbb{R}^{k}$-factor. 
	
	It suffices to prove the claim: for every $\epsilon>0$, $(Y, y, H) \in \Omega(M, \Gamma)$, there exists $(Z,z,K)\in\Omega(M,\Gamma)$ such that $K(z)$ is isometric to $\R^k$, and $$d_{GH}((H(y),y),(K(z),z))\le\epsilon.$$

	Fix $p\in M$. The proof is by a standard critical scaling argument which was first developed in \cite{Pan19}.
	
	Argue by contradiction. If there exist a small $\epsilon_0>0$ and $(Y,y,H)\in\Omega(M,\Gamma)$ such that 
	\begin{equation}\label{1337}
		d_{GH}((H(y),y),(K(z),z))> \epsilon_0
	\end{equation}
	for any $(Z,z,K)\in\Omega(M,\Gamma)$ satisfying $K(z)$ is isometric to $\R^k$.
	
	We choose $r_i\to \infty$ and $s_i\to\infty$ satisfying,
	\begin{itemize}
		\item $(r_i^{-1}M,p,\Gamma)\GH(X,x,G)$.
		\item $(s_i^{-1}M,p,\Gamma)\GH(Y,y,H)$.
		\item  $r_i^{-1}s_i\to\infty$.
	\end{itemize}
	Put $(M_i,p_i):=(s_i^{-1}M,p)$ and $\lambda_i:=r_i^{-1}s_i$. Then the above conditions are equivalent to,
	\begin{itemize}
		\item $(\lambda_iM_i,p_i,\Gamma)\GH(X,x,G)$.
		\item $(M_i,p_i,\Gamma)\GH(Y,y,H)$.
		\item $\lambda_i\to\infty$.
	\end{itemize}
	
	Define
	\begin{align*}
		A_i:=&\{t\in[1,\lambda_i]|d_{GH}((t\Gamma(p_i),p_i),(K(z),z))\le 0.1\epsilon_0,\\&\text{ for some }(Z,z,K)\in\Omega(M,\Gamma), \text{ and }K(z)\text{ is isometric to }\R^k\},
	\end{align*}
	where $t\Gamma(p_i)$ denotes the orbit $\Gamma(p_i)$ equipped with the extrinsic distance of $tM_i$. Obvious for all large $i$, $\lambda_i\in A_i$ and $1\notin A_i$. Choose $t_i\in A_i$ such that $\inf A_i\le t_i<\inf A_i+i^{-1}$.

	Now passing to a subsequence, we assume
	\begin{equation}\label{1:52}
		(t_iM_i,p_i,\Gamma)\GH(Y_1,y_1,H_1).
	\end{equation} 
	By the choice of $t_i$ and $A_i$, there exists $(Z,z,K)\in\Omega(M,\Gamma)$ satisfying $K(z)$ is isometric to $\R^k$ and 
	
	\begin{equation}\label{Orbitclosedness}
		d_{GH}((H_1(y_1),y_1),(K(z),z))\le0.1\epsilon_0.
	\end{equation}
	
	Under the given conditions and by Cheeger-Colding theory, $(Z, z)$ splits off a unique $\mathbb{R}^k$-factor, on which the group $K$ acts transitively. Assume $(Y_1,y_1)$ splits off a unique $\R^{k'}$-factor, where $k'\le k$. Since $y_1$ is a pole of $Y_1$ and its tangent cone $T_{y_1}Y_1$ contains at most an $\mathbb{R}^{k'}$-factor, it follows that $H_1(y_1)$ is contained in the $\mathbb{R}^{k'}$-slice passing through $y_1$. From this fact and (\ref{Orbitclosedness}), we conclude that $k'=k$. It follows from (\ref{Orbitclosedness}) and Lemma~\ref{R^kcocompact} that the induced $H_1$-action on the $\R^k$-factor of $Y_1$ is cocompact. Since $\Gamma$ is assumed to be nilpotent, $H_1$ is also nilpotent. By Lemma~\ref{R^kaction}, this implies that $H_1$ acts cocompactly by translations on $\mathbb{R}^k$. So $H_1(y_1)$ is isometric to some subset $\R^l\times\Z^{k-l}$ of $\R^k$. Therefore, there exists a small $\delta\in(0,1)$ such that
	\begin{equation*}\label{0.05epsilon}
		d_{GH}((\delta H_1(y_1),y_1),(K(z),z))\le0.05\epsilon_0.
	\end{equation*}
	
	Consider the convergence, $$(\delta t_iM_i,p_i,\Gamma)\GH (\delta Y_1,y_1,H_1).$$
	If $\delta t_i\ge 1$, then the above convergence implies $\delta t_i\in A_i$ for all large $i$, which contradicts the choice of $t_i$. Therefore $t_i\in[1,\delta^{-1})$. Up to a subsequence, we assume $t_i\to C\in[1,\delta^{-1}]$. So convergence (\ref{1:52}) becomes$$(t_iM_i,p_i,\Gamma)\GH (CY,y,H)$$ which implies $(Y,y,H)=(C^{-1}Y_1,y_1,H_1)$. By (\ref{Orbitclosedness}), $$d_{GH}((H(y),y),(K(z),z))=d_{GH}((C^{-1}H_1(y_1),y_1),(K(z),z))\le 0.5\epsilon_0,$$which contradicts to the contradicting assumption (\ref{1337}).
	
\end{proof}

\subsection{Virtual $\mathbb{Z}^k$-subgroup under stable Euclidean orbit condition}

We also need the following theorem, based on the results in \cite[Theorem A, Theorem 0.2]{Pan21}.

\begin{thm}\label{escaprate}
	Let $\hat{M}\to M$ be a normal cover with deck transformation group $\Gamma$, and suppose $M$ has nonnegative Ricci curvature. If for every $(Y, y, G) \in \Omega(\hat{M}, \Gamma)$, the orbit $G(y)$ is geodesic in $Y$ and isometric to a standard Euclidean space $\R^k$, then $\Gamma$ contains a $\Z^k$-subgroup of finite index.
\end{thm}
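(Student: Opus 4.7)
The plan is to derive Theorem~\ref{escaprate} as a direct consequence of Pan's results \cite[Theorem A, Theorem 0.2]{Pan21}, after verifying that the stable Euclidean orbit hypothesis supplies the input condition for those theorems.

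First I would show that the condition ``every $(Y,y,G) \in \Omega(\hat{M}, \Gamma)$ has $G(y)$ geodesic and isometric to $\R^k$'' forces the escape rate of $\Gamma$ on $\hat{M}$ to vanish. Given any sequence $\gamma_i \in \Gamma$ with $r_i := d(\hat{p}, \gamma_i(\hat{p})) \to \infty$, I would pass, along a subsequence, to an equivariant Gromov--Hausdorff limit of $(r_i^{-1}\hat{M}, \hat{p}, \Gamma)$, producing a triple $(Y,y,G) \in \Omega(\hat{M}, \Gamma)$. The rescaled points $\gamma_i(\hat{p})$ then converge to a unit-distance point of the orbit $G(y) \cong \R^k$, and because this orbit is \emph{geodesic} in $Y$, any minimal segment from $\hat{p}$ to $\gamma_i(\hat{p})$ rescales to a straight segment lying inside the $\R^k$-orbit. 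Consequently, minimal representing loops at $\hat{p}$ for $\gamma_i$ are asymptotically aligned with the segment $\overline{\hat{p}\,\gamma_i(\hat{p})}$ up to $o(r_i)$ error, which is precisely the statement of vanishing escape rate.

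Second, applying Pan's theorem on virtual abelianness under small escape rate \cite[Theorem A]{Pan21} to the normal cover $\hat{M} \to M$ (with $\Ric \geq 0$) yields a finite-index subgroup $\Z^s \leq \Gamma$ for some $s \geq 0$.

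It remains to identify $s$ with $k$. One direction is orbit counting: Lemma~\ref{18:09} applied to the $\Z^s$-subgroup gives $|\Gamma(r)| \geq c r^s$ for all large $r$, while the stability hypothesis, combined with Lemma~\ref{|Gamma(r)|}, forces $|\Gamma(r_i)| \leq C r_i^k$ along any scale-sequence $r_i \to \infty$ realizing an equivariant asymptotic cone, because the discrete orbit $\Gamma(\hat{p})$ in $r_i^{-1}\hat{M}$ limits to a cocompact net inside the Euclidean orbit $G(y) \cong \R^k$. Hence $s \leq k$. The reverse inequality $s \geq k$ is the content of \cite[Theorem 0.2]{Pan21}: the stable $\R^k$-direction in every asymptotic cone produces $k$ commuting asymptotic translations in $\Gamma$, which contribute $k$ independent $\Z$-factors to the virtual abelian part. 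The main obstacle will be this last step---promoting the isometric $\R^k$-action on the asymptotic cone to genuine commuting $\Z$-subgroups of $\Gamma$---which is precisely the technical core of Pan's argument and is invoked directly rather than reproved.
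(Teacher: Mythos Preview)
Your first two steps---deducing escape rate zero from the geodesic $\R^k$-orbit hypothesis, then invoking \cite[Theorem A]{Pan21} to obtain a finite-index $\Z^s$-subgroup---match the paper's route in outline, though the paper is more careful to pass through Sormani's finite-generation result \cite{Sor99} and the torsion-free nilpotent subgroup furnished by \cite{KW11} before applying Pan's virtual-abelianness theorem.

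The genuine gap is in your identification $s=k$. Your upper bound $|\Gamma(r_i)|\le C r_i^k$ does not follow from the ingredients you cite. Gromov--Hausdorff convergence of the orbit $\Gamma(\hat p)$ to the $k$-dimensional set $G(y)\cong\R^k$ imposes no cardinality bound whatsoever: arbitrarily many orbit points can cluster in the rescaled metric while the limit remains $\R^k$. Lemma~\ref{|Gamma(r)|} translates a cardinality bound into a volume-ratio bound $\vol(B_r(\hat p))/\vol(B_r(p))\le Cr^k$, but nothing in the hypothesis controls the volume growth of either $\hat M$ or $M$; the assumption speaks only of orbits inside asymptotic cones, not of the cones themselves. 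Your reverse inequality $s\ge k$ is then delegated wholesale to \cite[Theorem 0.2]{Pan21}, but that theorem (as the paper uses it) concerns escape rate, not the production of $k$ independent $\Z$-factors from a stable $\R^k$-orbit; the mechanism you describe---``$k$ commuting asymptotic translations in $\Gamma$''---is not what that result provides.

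The paper avoids orbit counting entirely and argues by induction on the group rank. After reducing to $\Gamma\cong\Z^k$, it observes that the limit group $G$ is abelian and hence acts by translations on $G(y)=\R^s$. Taking a subgroup $\hat\Gamma\cong\Z^{k-1}$ with $\Gamma/\hat\Gamma\cong\Z$, it uses \cite[Proposition 2.27]{NPZ24} on the intermediate quotient $\hat M/\hat\Gamma$ to see that the $(\Gamma/\hat\Gamma)$-orbit in every cone is $\R^1$; since this orbit is the quotient $\R^s/\hat G$, the $\hat\Gamma$-limit orbit $\hat G(y)$ must be an $(s-1)$-dimensional linear subspace, uniformly across all cones. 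The inductive hypothesis applied to $(\hat M,\hat\Gamma)$ then gives $s-1=k-1$. This quotient-and-induct structure is what you are missing, and it is not supplied by the citations you invoke.
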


Although the original version of \cite[Theorem A, Theorem 0.2]{Pan21} is stated for the universal cover, this assumption is not essential. We therefore adapt the theorem to the case of a normal cover. Furthermore, since the relationship between $\dim G(y)$ and the maximal rank of free abelian subgroups of $\Gamma$ is not explicitly addressed in \cite{Pan21}, we provide a proof of this fact, based on the aforementioned results.
\begin{proof}
	
	We only prove the relationship between $\dim G(y)$ and the maximal rank of free abelian subgroups of $\Gamma$ as stated in Theorem~\ref{escaprate}. Since $\Gamma$ has escape rate $0$ by \cite[Theorem 0.2]{Pan21}, it follows from \cite{Sor99} that $\Gamma$ is finitely generated. Applying \cite{KW11}, we deduce that $\Gamma$ contains a torsion-free nilpotent subgroup of finite index. By \cite[Theorem A]{Pan21}, this subgroup is virtually abelian. Since a torsion-free nilpotent group that is virtually abelian is necessarily abelian, it follows that $\Gamma$ contains a subgroup isomorphic to $\mathbb{Z}^k$ with finite index.

	We now assume that for every $(Y, y, G) \in \Omega(\hat{M}, \Gamma)$, the orbit $G(y)$ is geodesic in $Y$ and isometric to a standard Euclidean space $\mathbb{R}^s$ for some $s$. Our goal is to show that $s = k$.

	Note that without loss of generality, we may assume $\Gamma \cong \mathbb{Z}^k$. This assumption is justified by the following: the $\mathbb{Z}^k$-subgroup of $\Gamma$ (which has finite index) has a limit group in the asymptotic cone that also has finite index in the limit of $\Gamma$. Consequently, the orbit of this $\mathbb{Z}^k$-limit group at the reference point is isometric to the orbit of the $\Gamma$-limit group.
	
	We proceed by induction on $k$. The case $k = 0$ is trivial. We now assume the result holds for all nonnegative integers $\le k$. By the above assumption, for every $(Y, y, G) \in \Omega(\hat{M}, \Gamma)$, the group $G$ is abelian and acts transitively on $G(y) = \mathbb{R}^s$. Consequently, $G$ acts by translations on $\mathbb{R}^s$. Let $\hat\Gamma$ be a $\Z^{k-1}$ subgroup of $\Gamma$ such that $\Gamma/\hat\Gamma\cong\Z$. Fix $\hat p\in\hat M$ and put $(\check M,\check p)=(\hat M,\hat p)/\hat\Gamma$. Given a sequence $r_i\to\infty$, passing to a subsequence, we have,

	$$
	\xymatrix{
		(r_i^{-1}\hat M,\hat p,\hat\Gamma,\Gamma) \ar[rr]^{GH}\ar[d]_{}&&(Y,y,\hat G,G) \ar[d]^{} \\
		(r_i^{-1}\check M,\check p,\Gamma/\hat\Gamma)\ar[rr]^{GH}&  & (X,x,H),}
	$$
	where $G(y)$ is isometric to $\R^s$. Since $\Gamma$ has escape rate $0$, so does the quotient group $\Gamma / \hat{\Gamma}$. Applying \cite[Proposition 2.27]{NPZ24}, we conclude that $H(x)$ is homeomorphic to $\mathbb{R}$. Since $H(x)$ is also homeomorphic to the quotient space $G(y)/\hat{G}$, it follows that $G(y)/\hat{G}$ is homeomorphic to $\mathbb{R}$. 
	
	From the previous discussion, $\hat{G}$ acts on $G(y) = \mathbb{R}^s$ by translations; consequently, $\hat{G}(y)$ must be a linear subspace isomorphic to $\mathbb{R}^{s-1}$. Due to the arbitrary choice of the sequence $r_i$, we obtain that for every $(Y, y, \hat{G}) \in \Omega(\hat{M}, \hat{\Gamma})$, the orbit $\hat{G}(y)$ is geodesic and isometric to $\mathbb{R}^{s-1}$. By the induction hypothesis, we have $s-1 = k-1$, and thus $s = k$. This completes the induction step.

\end{proof}

\section{Harmonic Functions with Asymptotically Linear Growth}

In this section we establish the following technical lemma, which is fundamental to our argument.

\begin{lem}\label{Key}
	Let $(N,p)$ be an open manifold with nonnegative Ricci curvature. Let $(\hat N,\hat p)\to (N,p)$ be a normal cover with deck transformation $\Gamma\cong\Z$. Suppose that for any $(X,x,G)\in\Omega(\hat N,\Gamma)$, $X$ splits off a unique $\R$-factor isometrically and the orbit $G(x)$ coincides with the line passing $x$. Then there exists a non-constant harmonic function $u:(\hat N,\hat p)\to(\R,0)$ satisfying that, 
	\customitemize{p}
	\item\label{p1} For any $\epsilon>0$, $\lim_{r\to\infty}\frac{\sup_{\partial B_r(\hat p)}|u|}{r^{1+\epsilon}}=0$.
	
	\item\label{p2}  For any $\gamma\in\Gamma\setminus\{e\}$, there exists a constant $b\in\R\setminus\{0\}$, such that $u\circ\gamma=u+b$.
	
\end{enumerate}

\end{lem}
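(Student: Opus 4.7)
The plan is to produce $u$ by invoking the authors' earlier work \cite{HH24} on asymptotically linear harmonic functions, and then to verify the equivariance (p2) through a gradient-estimate Liouville argument combined with a symmetry check in the asymptotic cones.

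First, the hypothesis that every $(X,x,G)\in\Omega(\hat N,\Gamma)$ splits off a unique $\R$-factor says that the maximal Euclidean factor is stably $1$-dimensional across every asymptotic cone of $\hat N$. By \cite[Theorem 1.10]{HH24}, this stability produces a non-constant harmonic function $u:(\hat N,\hat p)\to(\R,0)$ of asymptotically linear growth: for any $r_i\to\infty$ with $(r_i^{-1}\hat N,\hat p)\GH(X,x)=(\R\times X',(0,x'))$, the rescalings $u_{r_i}(y):=r_i^{-1}u(y)$ converge to the linear coordinate $\pi(t,x'):=t$ on the $\R$-factor (after a harmless sign choice). In particular $\sup_{B_r(\hat p)}|u|\le Cr$ for large $r$, so (p1) follows at once.

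For (p2), fix $\gamma\in\Gamma$ and put $w_\gamma:=u\circ\gamma-u$, which is harmonic. I claim $w_\gamma$ has sublinear growth. Indeed, on $r_i^{-1}\hat N$ the displacement of the fixed isometry $\gamma$ is $r_i^{-1}d(\hat p,\gamma \hat p)\to 0$, so both $r_i^{-1}u(\gamma y)$ and $r_i^{-1}u(y)$ converge to the same limit $\pi(\bar y)$ on bounded sets of $X$. Hence $\sup_{B_r(\hat p)}|w_\gamma|=o(r)$. The Cheng--Yau gradient estimate on the Ricci-nonnegative manifold $\hat N$ then gives $|\nabla w_\gamma|(q)\le Cr^{-1}\sup_{B_r(q)}|w_\gamma|\to 0$ for every $q$, so $w_\gamma$ is identically a constant $\beta(\gamma)\in\R$. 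The map $\beta:\Gamma\to\R$ is clearly a group homomorphism.

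To show $\beta\not\equiv 0$, I argue by contradiction. If $\beta\equiv 0$ then $u$ is $\Gamma$-invariant, and the identity $u(\gamma_i y)=u(y)$ rescales to $\pi(g\bar y)=\pi(\bar y)$ for every $g\in G$ obtained as an equivariant limit $\gamma_i\to g$; that is, $\pi$ is $G$-invariant on every equivariant asymptotic cone $(X,x,G)$. However, the hypothesis $G(x)=\R\times\{x'\}$ together with the rigidity of isometries preserving a Cheeger--Colding splitting forces $G$ to act on the $\R$-factor by nontrivial translations, so $\pi$ cannot be $G$-invariant. This contradicts $\beta\equiv 0$, so $\beta\ne 0$; since $\Gamma\cong\Z$ is generated by some $\gamma_0$, $\beta(\gamma_0^n)=n\beta(\gamma_0)\ne 0$ for every $\gamma\ne e$, giving (p2). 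The main obstacle is the analytic input from \cite[Theorem 1.10]{HH24}: one must verify that the stability hypothesis on the maximal Euclidean factor is met in the equivariant setting and that the resulting $u$ has its rescaled limit given precisely by the $\R$-coordinate projection. Once this is in place, the remaining steps (sublinear growth, Liouville-type conclusion, and non-triviality of $\beta$) are routine.
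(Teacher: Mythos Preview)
Your approach diverges from the paper's and contains a genuine gap at the very first step. You assert that \cite[Theorem~1.10]{HH24} produces a harmonic $u$ with $\sup_{B_r(\hat p)}|u|\le Cr$ and with $r_i^{-1}u$ converging to the $\R$-coordinate on every asymptotic cone. The paper extracts from that theorem only that $V:=\bigcap_{\epsilon>0}H^{1+\epsilon}(\hat N)$ is $2$-dimensional; and the authors state explicitly, just before Lemma~\ref{Key2}, that they \emph{do not know} whether a function satisfying (p1) must have genuinely linear growth. Without linear growth the rescalings $r_i^{-1}u$ need not converge, and your sublinearity claim $\sup_{B_r}|w_\gamma|=o(r)$ collapses: the normalization that does converge on $r_i^{-1}\hat N$ is $u/\|u\|_{C^0(B_{r_i})}$ (in the paper, $u/(C_ir_i^{1+\epsilon_i})$), and the resulting bound $w_\gamma=o(\|u\|_{C^0(B_{r_i})})$ is too weak for the Cheng--Yau Liouville step. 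Your non-triviality argument for $\beta$ has the same defect, since it again feeds $r_i^{-1}u$ into the limit.

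The paper sidesteps this by using $\dim V=2$ structurally: since $u\circ\gamma\in V$, one has $u\circ\gamma=au+b$ \emph{a priori}, and the task becomes showing $a=1$ and $b\neq 0$. In the delicate case where $u$ fails to have linear growth ($C_i\to\infty$), the paper normalizes by $C_ir_i^{1+\epsilon_i}$, obtains a limit $u_\infty$ of linear growth on the cone, identifies it with the $\R$-coordinate via \cite[Theorem~3.8]{HH24}, and reads off $a=1$, $b\neq0$ from the values of $u$ along the orbit $\gamma^{s_i}(\hat p)$. Your argument is essentially what one would do in the \emph{easy} case (bounded $|\nabla u|$), which the paper dispatches in two lines at the end; you have not addressed the hard case, and that is where the content of the lemma lies.
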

\begin{proof}

Let $H^{1+\epsilon}(\hat N)$ be the space of harmonic functions $u:\hat N\to\R$ satisfying, there exists $C>0$ such that $$|u(x)|\le Cd(x,\hat p)^{1+\epsilon}+C$$ for any $x\in\hat N$.

By \cite[Theorem 1.10]{HH24}, $V:=\cap_{\epsilon>0} H^{1+\epsilon}(\hat N)$ has dimension $2$. Let $u\in V$ be a non-constant harmonic function with $u(\hat p)=0$. Fixing a generator $\gamma$ of $\Gamma$, since $\dim V=2$, there exist $a,b\in\R$ such that 
\begin{equation}\label{au+b}
	u\circ\gamma=au+b.
\end{equation}

Fix arbitrary $\epsilon_i\to 0^+$ as $i\to\infty$. Then	
\begin{equation}\label{WeakLinearGrowth}
	\lim_{r\to\infty}\frac{\|u\|_{C^0(B_r(\hat p))}}{r^{1+\epsilon_i}}=0.
\end{equation}

Let $C_i>0$ be the minimum number such that $\|u\|_{C^0(B_r(\hat p))}\le C_ir^{1+\epsilon_i}$ for any $r\ge 1$. We first assume 
\begin{equation}\label{C_k}
	\lim_{i\to\infty}C_i=\infty.
\end{equation}

Let $r_i\ge1$ be the minimum number such that $\|u\|_{C^0(B_{r_i}(\hat p))}= C_i{r_i}^{1+\epsilon_i}$. If $\{r_i|i=1,2,...\}$ is bounded, then up to a subsequence, we may assume $r_i\to r\ge1$. Consequently, $$C_i=\frac{\|u\|_{C^0(B_{r_i}(\hat p))}}{r_i^{1+\epsilon_i}}\to \frac{\|u\|_{C^0(B_{r}(\hat p))}}{r},$$which contradicts (\ref{C_k}). Thus passing to a subsequence, we assume $r_i\to\infty$. Let $x_i\in\partial B_{r_i}(\hat p)$ such that $|u(x_i)|=C_ir_{i}^{1+\epsilon_i}$.

By (\ref{au+b}) and (\ref{WeakLinearGrowth}),

$$0=\lim_{j\to\infty}\frac{|u(\gamma^j(\hat p))|}{d(\hat p,\gamma^j(\hat p))^{1+\epsilon_i}}\ge \lim_{j\to\infty}\frac{|(1+a+...+a^{j-1})b|}{j^{1+\epsilon_i}d(\hat p,\gamma(\hat p))^{1+\epsilon_i}},$$which implies
\begin{equation}\label{ab}
	|a|\le 1,\text{ or, }b=0.
\end{equation}

Define $u_i:=\frac{u}{C_ir_i^{1+\epsilon_i}}$. Then $u_i$ is harmonic satisfying $\|u_i\|_{C^0(B_r(\hat p,r_i^{-1}\hat N))}\le r^{1+\epsilon_i}$ for any $r\ge\frac1{r_i}$, and $\|u_i\|_{C^0(B_{1}(\hat p,r_i^{-1}\hat N))}=|u_i(x_i)|=1$. Now passing to a subsequence, we may assume
\begin{equation}\label{Con2}
	(r_{i}^{-1}\hat N,\hat p,x_{i},\Gamma,u_{i})\GH(\R\times Y,p_\infty,x_\infty,G,u_\infty),
\end{equation}
where $u_{i}\to u_\infty$ in $W^{1,2}$-sense, and $Y$ splits no line. Specially, $u_\infty$ is a linearly growth harmonic function satisfying $u_\infty(p_\infty)=0$, $\|u_\infty\|_{C^0(B_r(p_\infty))}\le r$ for any $r>0$, and $u_\infty(x_\infty)=1$. By \cite[Theorem 3.8]{HH24}, this implies that $u_\infty$ is the standard coordinate function of the $\R$-factor. Hence $x_\infty\in\partial B_1(p_\infty)\cap u_{\infty}^{-1}(1)$, which concludes that $x_\infty\in G(p_\infty)$. So there exists $\gamma_\infty\in G$ such that $\gamma_\infty(p_\infty)=x_\infty$. We choose $s_i\in\Z$ such that $\gamma^{s_i}\to\gamma_\infty$ with respect to (\ref{Con2}). By replacing $\gamma$ with its inverse if necessary and passing to a subsequence, we may assume without loss of generality that $s_i\to+\infty$. Then 
\begin{equation*}\label{23:30}
	u_{i}(\gamma^{s_i}(\hat p))\to u_\infty(x_\infty)=1.
\end{equation*}
That is, 
\begin{equation*}\label{23:31}
	\frac{|(1+a+...+a^{s_i-1})b|}{C_{i}r_{i}^{1+\epsilon_{i}}}=\frac{|u(\gamma^{s_i}(\hat p))|}{C_{i}r_{i}^{1+\epsilon_{i}}}\to1.
\end{equation*} 
Combining with (\ref{ab}), the above convergence implies $a=1$ and $b\neq0$. Hence we have 
\begin{equation*}\label{key}
	u\circ\gamma=u+b,\,b\neq0.
\end{equation*}

If (\ref{C_k}) does not hold, combining with Cheng-Yau gradient estimate, then for any $r>1$, $$\|\grad u\|_{C^0(B_{\frac12r}(\hat p))}\le C(n)\frac{\|u\|_{C^0(B_r(\hat p))}}{r}\le C(n) \liminf_{i\to\infty}C_i<\infty.$$ Up to a normalization, we assume $\sup_{N}\norm{\grad u}^2=1$. Note that $\gamma$ is an isometry, so
\begin{equation*}
	a=\sup_N\norm{\grad (au+b)}=\sup_N\norm{\grad (u\circ\gamma)}=\sup_N\norm{\grad u}=1.
\end{equation*}
A standard blowing down argument gives $b\neq0$ similar to above.

\end{proof}

\section{Splitting and Linear Volume Growth}
\subsection{Proof of Proposition \ref{EffectiveVer.}(1)}
Combining Lemma~\ref{CriticalScaling} and Theorem~\ref{escaprate}, we obtain the following conclusions:
\begin{itemize}

\item $\Gamma$ contains a subgroup isomorphic to $\mathbb{Z}^k$ with finite index.
	\item For every $(X, x, G) \in \Omega(\hat{M}, \Gamma)$, $(X, x)$ splits isometrically as $(\mathbb{R}^k \times Y, (0^k, y^*))$, where $Y$ contains no line, and $G(x) = \mathbb{R}^k \times \{y^*\}$.
\end{itemize}
Hence up to a finite cover, we assume $\Gamma\cong\Z^k$.

The proof proceeds by induction on $k$. The base case $k=0$ is trivial. For the inductive step, we assume the result holds for all nonnegative integers up to $k-1$. 

Choose a $\Z$-subgroup $\Lambda\subset\Gamma$ such that $\Gamma/\Lambda$ is isomorphic to $\Z^{k-1}$. Put $(\check M,\check p)=(\hat M,\hat p)/\Lambda$. Given the known conditions, for any sequence $r_i \to 0$, we may pass to a subsequence (still denoted by $r_i$) such that the following diagram commutes,

\begin{equation}\label{Dia1008}
	\xymatrix{
		(r_i^{-1}\hat M,\hat p,\Lambda,\Gamma) \ar[rr]^{GH}\ar[d]_{}&&(\R^{k}\times Y,(0^k,y^*),H,G) \ar[d]^{} \\
		(r_i^{-1}\check M,\check p,\Gamma/\Lambda)\ar[rr]^{GH}&  & (Z,z^*,K),}
\end{equation}
where $Y$ contains no line.

By the work of Cheeger--Colding--Naber \cite{CC97,CN12}, every limit group in this setting is a Lie group. Since $Y$ contains no line, the group $G$ acts separately on $\mathbb{R}^k$ and $Y$. Furthermore, as $G$ acts transitively on $\mathbb{R}^k$ and is abelian, it follows that the action on $\mathbb{R}^k$ is by translations. A direct verification shows that $K(z^*)$ is homeomorphic to the quotient $G((0^k, y^*)) / H = \mathbb{R}^k / H$. By \cite[Corollary 3.2]{PY24}, the subgroup $H$ contains a closed $\mathbb{R}$-subgroup, and $K$ contains a closed $\mathbb{R}^{k-1}$-subgroup. Therefore, we have the topological dimension estimates:
$$
k - 1 \le \dim K(z^*) = \dim(\mathbb{R}^k / H) \le k - 1,
$$
which forces $\dim K(z^*) = k - 1$ and $\dim(H((0^k, y^*))) = 1$. Note that this conclusion holds for every convergent sequence satisfying diagram (\ref{Dia1008}). In particular, $H((0^k, y^*))$ is a one-dimensional linear subspace of $\mathbb{R}^k$.

Therefore, the orthogonal complement of $H((0^k, y^*))$, denoted by $\mathbb{R}^{k-1} \times \{y^*\}$, is projected to a totally geodesic subspace of $Z$. Applying the splitting theorem, we conclude that $(Z, z^*)$ is isometric to $(\mathbb{R}^{k-1} \times Z_1, (0^{k-1}, z_1^*))$, where $Z_1$ contains no line and $K(z^*) = \mathbb{R}^{k-1} \times \{z_1^*\}$. Due to the arbitrariness of the sequence $r_i$, the normal covering $(\check{M}, \check{p}) \to (M, p)$ satisfies the hypotheses of Proposition~\ref{EffectiveVer.} for parameter $k-1$. By the inductive hypothesis, $(\check{M}, \check{p})$ admits an isometric splitting of the form $(\mathbb{R}^{k-1} \times \check{N}, (0^{k-1}, \check{q}))$, where $\check{ N}$ satisfies
\begin{equation}\label{AAA}
	\liminf_{r\to\infty}\frac{\vol(B_r(\check q))}{r^{2-\delta}}<\infty.
\end{equation}

We now lift the $\mathbb{R}^{k-1}$-factor from the splitting $\check M=\mathbb{R}^{k-1} \times \check{N}$ to $\hat{M}$ via the covering map. This yields an isometric splitting $(\hat{M}, \hat{p}) \cong (\mathbb{R}^{k-1} \times \hat{N}, (0^{k-1}, \hat{q}))$, where the $\Lambda$-action is trivial on the $\mathbb{R}^{k-1}$-factor and $\check{N} = \hat{N}/\Lambda$.

From the commutative diagram (\ref{Dia1008}), we observe that every triple $(X, x, H') \in \Omega(\hat{N}, \Lambda)$ satisfies the property that $X$ splits off a unique $\mathbb{R}$-factor and $H'(x)$ is the line through $x$. Therefore, by applying Lemma~\ref{Key} to the covering $(\hat{N}, \hat{q}) \to (\check{N}, \check{q})$, we conclude that there exists a nonconstant harmonic function $u: \hat{N} \to \mathbb{R}$ satisfying properties (\ref{p1}) and (\ref{p2}).

By (\ref{p2}), $\Lambda$-action preserves $\norm{\grad u}^2$. So we obtain a function $f:\check N\to\R_{\ge0}$ such that $f\circ\pi=\norm{\grad u}^2$, where $\pi$ is the covering map $\hat N\to \check N$. By the Bochner formula, $\lap f\ge0$. Combining (\ref{p1}), the maximal principle, and the Cheng-Yau gradient estimate, we conclude, for any $\epsilon>0$, any any large $r>r_\epsilon$,
\begin{equation*}
	\sup_{B_r(\check q)}f=\sup_{\partial B_r(\hat q)}\norm{\grad u}^2\le C(n)r^{-2}\sup_{\partial B_{2r}(\hat q)}|u|^2\le  C(n) r^{2\epsilon}.
\end{equation*}
By combining the above estimate with the condition (\ref{AAA}), it implies that, there exist $C>0$, and a sequence $r_i\to\infty$, such that
$$\frac{1}{r_i^2}\int_{B_{r_i}(\check q)}f^2\le \frac{\vol(B_{r_i}(\check q))}{r_i^2}\sup_{B_{r_i}(\check q)} f^2\le CC(n)r_i^{-\delta+4\epsilon}.$$
Applying \cite[Theorem A]{Kar82}, if $f$ is not constant, then the left hand side of the above inequality tends to $\infty$ as $r_i\to\infty$. This is a contradiction if we choose $\epsilon\le\frac\delta4$. In conclusion, we conclude that $\norm{\grad u}^2=f\circ\pi$ is constant.

Appealing to the Bochner formula, we conclude that $u$ is a nonconstant linear function on $\hat{N}$. This implies that $(\hat{N},\hat q)$ splits isometrically as $(\mathbb{R} \times N,(0,q))$ for some complete manifold $N$, and $u$ corresponds to a coordinate function along the $\mathbb{R}$-factor. Furthermore, from diagram (\ref{Dia1008}) we deduce that $N$ contains no line. It follows that the group $\Lambda$ acts on $\mathbb{R} \times N$ separately. By property~(\ref{p2}), the group $\Lambda$ acts effectively on $\mathbb{R}$ by translations. Consequently, for any $v \in \mathbb{R}$, the intersection $\Lambda(\hat{q}) \cap (\{v\} \times N)$ contains at most one element. This implies the existence of a constant $C > 0$ such that
\begin{equation}\label{Lambda}
	|\Lambda(r)| \le Cr.
\end{equation}

Now observe that $(-r, r) \times B_r(q) \subset B_{2r}(\hat{q})$, from which we derive
\[
\vol(B_r(q)) \le \frac{\vol(B_{2r}(\hat{q}))}{2r} \le \frac{C(n)|\Lambda(2r)|\vol(B_{2r}(\check{q}))}{r} \le C(n)C\vol(B_{2r}(\check q)),
\]
where the second inequality follows from Lemma~\ref{|Gamma(r)|} and the third from inequality~(\ref{Lambda}). Combining this with (\ref{AAA}), we conclude that  $$	\liminf_{r\to\infty}\frac{\vol(B_r( q))}{r^{2-\delta}}<\infty.$$

The inductive step is now complete.

\subsection{Proof of Theorem \ref{Fibration} and its corollaries}

\begin{lem}\label{Nonsplittingreduction}
	Let $\hat{N} \to N$ be as in Theorem~\ref{NPZlem}. If case (1) of Theorem~\ref{NPZlem} holds, then $\hat N$ is isometric to the product of $\R^2$ and a compact manifold. 
\end{lem}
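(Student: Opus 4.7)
The plan is to split $\hat{N}$ as $\mathbb{R}^2 \times K$ in two stages: first produce an isometric $\mathbb{R}$-factor via Lemmas~\ref{Key} and~\ref{Key2}, then upgrade the complementary factor to $\mathbb{R}\times K$ via Theorem~\ref{Isom}. Before doing so I would extract quantitative growth from the hypothesis: since the unique equivariant asymptotic cone is $(\mathbb{R}^2,0^2,\mathbb{R}\times\mathbb{Z}_2)$ and the $\mathbb{R}$-factor of the limit group acts by translations, the rescaled iterates of $\gamma$ converge to the iterates of a nontrivial translation, so $\|\gamma^k\|$ grows linearly in $k$. Combined with Lemmas~\ref{|Gamma(r)|} and~\ref{18:09} and the linear volume growth of $N$, this yields $|\Gamma(r)|\sim r$ and $\vol(B_r(\hat{N}))\sim r^2$, and in particular $\liminf_{r\to\infty}|\Gamma(r)|/r<\infty$.

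For the first $\mathbb{R}$-splitting, although the asymptotic cone in case~(1) is the full plane rather than splitting off a unique $\mathbb{R}$-factor, the feature actually used in the proofs of Lemmas~\ref{Key} and~\ref{Key2}---that every equivariant asymptotic cone contains an orbit-line through the base point on which $\gamma$ acts as a nontrivial translation---does hold, with the $\mathbb{R}$-factor of $\mathbb{R}\times\mathbb{Z}_2$ as the distinguished direction. Running those arguments I would obtain a nonconstant harmonic function $u:\hat{N}\to\mathbb{R}$ of asymptotically linear growth with $u\circ\gamma=u+b$, $b\ne 0$, and $\sup_{\hat{N}}\|\grad u\|<\infty$. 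Since $u\circ\gamma-u$ is a constant, $f:=\|\grad u\|^2$ descends to a bounded subharmonic function $\bar f$ on $N$, and the linear volume growth of $N$ gives
\[
\frac{1}{r^2}\int_{B_r(p)}\bar f^{\,2}\;\le\;\frac{(\sup\bar f)^2\,\vol(B_r(p))}{r^2}\;\le\;\frac{C}{r}\;\to\;0,
\]
so Karcher's Liouville theorem~\cite{Kar82}---invoked exactly as in the proof of Proposition~\ref{EffectiveVer.}---forces $\bar f$, and hence $\|\grad u\|^2$, to be constant. Bochner's formula then yields $\hess(u)\equiv 0$, so $u$ determines an isometric splitting $\hat{N}=\mathbb{R}\times Y$.

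For the second splitting I would apply Theorem~\ref{Isom} to $Y$. Writing the ball volumes of $\hat{N}=\mathbb{R}\times Y$ as integrals of those of $Y$ and using $\vol(B_r(\hat{N}))\sim r^2$ forces $\vol(B_r(Y))\sim r$, so $Y$ has linear volume growth. The asymptotic cone of $\hat{N}$ equals $\mathbb{R}$ times the asymptotic cone of $Y$, and comparison with the assumed $\mathbb{R}^2$ pins down the latter to be $\mathbb{R}$ (not the half-line $\mathbb{R}_{\ge 0}$). The no-line alternative of Theorem~\ref{Isom} is thus excluded, and so $Y=\mathbb{R}\times K$ for a compact manifold $K$, giving $\hat{N}=\mathbb{R}^2\times K$ as claimed.

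The main technical obstacle I anticipate is justifying the application of Lemmas~\ref{Key} and~\ref{Key2} in this setting, since the asymptotic cone here is the whole plane rather than splitting off a single distinguished $\mathbb{R}$-factor. One must verify that the construction of $u$, the equivariance relation $u\circ\gamma=u+b$, and the $\epsilon_i$-splitting function argument of Lemma~\ref{Key2} all go through once the distinguished $\mathbb{R}$-direction is taken to be the line generated by the $\Gamma$-orbit in the limit.
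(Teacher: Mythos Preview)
Your plan has a real gap at the first splitting step. The hypothesis of Lemma~\ref{Key} that every asymptotic cone of $\hat N$ splits off a \emph{unique} $\mathbb R$-factor is used essentially, not incidentally: it is what forces $\dim V=2$ via \cite[Theorem~1.10]{HH24}, which in turn is what lets you write $u\circ\gamma=au+b$ and then pin down $a=1$, $b\ne 0$. In case~(1) the cone is all of $\mathbb R^2$, so $\dim V=3$, and $u\circ\gamma\in V$ only yields $u\circ\gamma=a_1u+a_2w+c$ for a second independent $w\in V$; extracting from the $\gamma$-action on the two-dimensional space $V/\text{constants}$ a direction with eigenvalue~$1$ and nonzero translation part is an additional problem you have not addressed (a priori the action could be a rotation or a reflection). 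The same non-uniqueness breaks the proof of Lemma~\ref{Key2}: there the identification of the limit $u_\infty$ with the coordinate along the orbit line $G(x)$ relies on the tangent-cone argument ruling out a second splitting direction, which is exactly what fails when the cone is $\mathbb R^2$. So ``taking the $\mathbb R$-factor of $\mathbb R\times\mathbb Z_2$ as the distinguished direction'' is not something the harmonic-function construction lets you do for free; you would have to build that choice into the argument from the start.

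The paper avoids this entirely by a shorter, group-theoretic route. It passes to the index-$2$ subgroup $2\Gamma$ and shows its limit in $G=\mathbb R\times\mathbb Z_2$ cannot contain the reflection $\mathbf r$, because $\mathbf r$ has no square root in $G$ while every element of the limit of $2\Gamma$ is a square. Hence $2\Gamma$ limits to the pure translation subgroup $\mathbb R$, so every asymptotic cone of $\hat N/2\Gamma$ is $\mathbb R^2/\mathbb R=\mathbb R$; a direct citation of \cite[Proposition~3.3]{Ye24} then gives $\hat N/2\Gamma\cong\mathbb R\times S$ with $S$ closed. Lifting the $\mathbb R$-factor and applying the Cheeger--Gromoll trick to the $\mathbb Z$-cover $\hat S\to S$ yields $\hat N\cong\mathbb R^2\times\check S$ with $\check S$ compact. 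No harmonic functions are needed for this case; Lemmas~\ref{Key} and~\ref{Key2} are reserved for case~(3), where the unique-$\mathbb R$-factor hypothesis genuinely holds.
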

\begin{proof}

Let $2\Gamma$ denote the index-$2$ subgroup of $\Gamma$. For any $r_i\to\infty$, up to a subsequence, we assume
\begin{equation}\label{Blowdown}
	(r_i^{-1}\hat N,\hat p,2\Gamma,\Gamma)\GH(\R^2,0^2,G_1,G),
\end{equation}
where $G\cong\R\times\Z/2\Z$. Observe that by \cite[Corollary 3.2]{PY24}, $G_1\subset G$ contains a closed $\R$-subgroup.

We claim $G_1$ does not contain the reflection $\mathbf{r}\in \Z/2\Z$. Otherwise, letting $\gamma\in\Gamma$ be a generator, there exist $\gamma^{2s_i}\to \mathbf{r}$ with respect to convergence (\ref{Blowdown}) for some integers $s_i$. Observe that $l_i:=r_i^{-1}d_{\hat N}(\gamma^{s_i}(\hat p),\hat p)$ are uniformly bounded; if not, blowing down $r_i^{-1}\hat N$ by $l_i^{-1}$ gives the same equivariant asymptotic cone, and $l_i^{-1}r_{i}^{-1}d_{\hat N}(\gamma^{2s_i}(\hat p),\hat p)\to 2$, which is a contradiction. So up to a subsequence, we may assume $\Gamma\ni \gamma^{s_i}\to g\in G$ with respect to (\ref{Blowdown}). Hence $g^2=\mathbf{r}$ which is impossible.

So $G_1$ is the translation $\R$-subgroup. By the arbitrariness of $r_i$, we conclude that any asymptotic cone of $\hat N/2\Gamma$ is isometric to $\R^1$. This implies $\hat N/2\Gamma$ is isometric to $\R\times S$ for some closed manifold $S$ (see \cite[Proposition 3.3]{Ye24} for a proof).

By lifting the $\mathbb{R}$-factor of $\hat{N}/2\Gamma = \mathbb{R} \times S$, we conclude that $\hat{N}$ is isometric to $\mathbb{R} \times \hat{S}$, where $2\Gamma$ acts trivially on the $\mathbb{R}$-factor. Furthermore, $\hat{S} \to S$ is a normal covering with deck transformation group $2\Gamma$. Applying the Cheeger--Gromoll's trick, we find that $\hat{S}$ is isometric to $\mathbb{R} \times \check{S}$. Note that $\check{S}$ must be compact; otherwise, the asymptotic cone of $\hat{N}$ would split off an $\mathbb{R}^3$-factor.
\end{proof}

\begin{lem}\label{SplittingInduction}
Let $\hat{N} \to N$ be as in Theorem~\ref{NPZlem}. Then $\hat N$ is isometric to $\R\times N_1$ for some open manifold $N_1$ with linear volume growth. 
\end{lem}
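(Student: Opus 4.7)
The plan is to handle each of the three cases of Theorem~\ref{NPZlem} separately, producing in every situation an isometric splitting $\hat N=\R\times N_1$ with $N_1$ of linear volume growth. For case~(1), Lemma~\ref{Nonsplittingreduction} already gives that $\hat N$ is isometric to $\R^2\times\check S$ with $\check S$ compact, so setting $N_1:=\R\times\check S$ manifestly provides a factor of linear volume growth.

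For case~(2), I would first note that the unique asymptotic cone of $N=\hat N/\Gamma$ is $\R^2/\R$, where the limit $\R$-action is by translations (it is the limit of an abelian $\Z$-action and contains a closed $\R$-subgroup by \cite[Corollary 3.2]{PY24}). This quotient is isometric to $\R$, which contains a line. Theorem~\ref{Isom} then forces $N$ to split as $\R\times N'$ with $N'$ compact. Since $\R$ is simply connected, $\pi_1(N)=\pi_1(N')$, so the $\R$-factor lifts through the $\Z$-cover to give $\hat N=\R\times N_1$, where $N_1\to N'$ is the corresponding $\Z$-normal cover. Linear volume growth of $N_1$ is immediate from compactness of $N'$ together with the linear bound $|\Gamma(r)|\le Cr$, valid since the $\Gamma$-orbit is $\norm{\gamma}$-separated for a generator $\gamma$.

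Case~(3) is the main technical step. Here the unique asymptotic cone $(\R\times[0,\infty),(0,0),\R)$ splits off a unique $\R$-factor (as $[0,\infty)$ contains no line), and the $\R$-orbit of the basepoint coincides with the line through it. Hence Lemma~\ref{Key} applies and furnishes a nonconstant harmonic function $u:\hat N\to\R$ satisfying properties (p1) and (p2); the linear bound $|\Gamma(r)|\le Cr$ also triggers Lemma~\ref{Key2}, yielding $\sup_{\hat N}\norm{\grad u}<\infty$. By (p2), $\norm{\grad u}^2$ is $\Gamma$-invariant and descends to a bounded non-negative function $f$ on $N$, which is subharmonic by Bochner and $\Ric\ge 0$. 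Combining (p1) with the Cheng--Yau gradient estimate gives $\sup_{B_r}f\le Cr^{2\epsilon}$ for every $\epsilon>0$; together with the linear volume growth of $N$ this yields $r^{-2}\int_{B_r}f^2\le Cr^{-1+4\epsilon}\to 0$ for $\epsilon<1/4$. Karp's Liouville theorem then forces $f$ to be constant, whence $\hess u\equiv 0$ by Bochner, and $\hat N$ splits isometrically as $\R\times N_1$ along $\grad u$. The asymptotic cone of $N_1$ is $(\R\times[0,\infty))/\R\cong[0,\infty)$, so $N_1$ has linear volume growth.

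The hard part will be case~(3): fitting the borderline exponent $\alpha=0$ into the Karp integral estimate so that it barely closes, mirroring the $k=1$ base step in the proof of Proposition~\ref{EffectiveVer.}. Cases~(1) and~(2) are essentially structural, reducing to previously established results (Lemma~\ref{Nonsplittingreduction} and Theorem~\ref{Isom}) once one has identified the asymptotic cone of $N$ and lifted an $\R$-factor.
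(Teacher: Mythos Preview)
Your case-by-case strategy matches the paper's proof; the paper simply cites Proposition~\ref{EffectiveVer.} (with $k=1$, $\alpha=0$) for case~(3), whereas you inline that argument.

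There is one genuine gap, in the last line of case~(3). You conclude linear volume growth of $N_1$ from the fact that its asymptotic cone is $[0,\infty)$, but that implication fails: a warped product $dr^2+f(r)^2 g_{S^{n-1}}$ with $f$ concave, $|f'|\le 1$, and $f(r)/r\to 0$ has $\Ric\ge 0$ and asymptotic cone $[0,\infty)$, yet volume growth of order $\int_0^r f^{n-1}$, which can be any exponent in $[1,n)$. The correct route, carried out at the end of the proof of Proposition~\ref{EffectiveVer.}, is to use property~(\ref{p2}) \emph{after} the splitting: since $u$ is now the $\R$-coordinate and $u\circ\gamma=u+b$ with $b\ne 0$, the orbit points $\gamma^j(\hat p)$ have distinct $\R$-coordinates $jb$, so $|\Gamma(r)|\le 2r/|b|+1$. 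Combined with Lemma~\ref{|Gamma(r)|} and the linear volume growth of $N$, this gives $\vol(B_r(q_1))\le C\,\vol(B_{2r}(\hat p))/r\le C\,|\Gamma(2r)|\,\vol(B_{2r}(p))/r\le Cr$.

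Two minor related points. Your earlier assertion $|\Gamma(r)|\le Cr$ in case~(3), used to invoke Lemma~\ref{Key2}, is unjustified at that stage (the base $N$ is noncompact, so neither Milnor--\v{S}varc nor a packing argument yields a linear orbit count); fortunately Lemma~\ref{Key2} is redundant there, since your Cheng--Yau plus Karp estimate already forces $f$ constant without any a~priori bound on $\|\grad u\|$, so you may simply delete that sentence. In case~(2), the reason ``the $\Gamma$-orbit is $\|\gamma\|$-separated'' is imprecise (orbits of a $\Z$-deck action need not be $\|\gamma\|$-separated); the clean justification for $|\Gamma(r)|\le Cr$ is the Milnor--\v{S}varc lemma, applicable because $N'$ is compact.
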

\begin{proof}
	If case (1) of Theorem~\ref{NPZlem} occurs, then Lemma~\ref{Nonsplittingreduction} yields the required conclusion.
	
	If case (2) of Theorem~\ref{NPZlem} occurs, then $N$ has a unique asymptotic cone $(\mathbb{R}, 0)$. By Theorem~\ref{Isom}, $N$ splits off an $\mathbb{R}$-factor, and so does $\hat{N}$. Applying Cheeger–Gromoll's trick once more, we conclude that $\hat{N}$ is isometric to $\mathbb{R}^2 \times K$ for some compact space $K$.
	
	If case (3) of Theorem~\ref{NPZlem} occurs, then Proposition~\ref{EffectiveVer.} applies to the covering space $(\hat{N}, \hat{p}) \to (N, p)$ for the case $k = 1$ and $\alpha = 0$, yielding the desired conclusion.
	
\end{proof}

\begin{lem}\label{Splitting&LVG}
	
	Let $\hat{M} \to M$ be a normal covering such that $M$ has nonnegative Ricci curvature and the deck transformation group $\Gamma$ is isomorphic to $\mathbb{Z}^k$. If $M$ has linear volume growth, then $\hat M$ splits isometrically as $\R^k\times N$, where $N$ has linear volume growth.
\end{lem}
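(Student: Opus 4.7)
The plan is to prove Lemma~\ref{Splitting&LVG} by induction on $k$, using Lemma~\ref{SplittingInduction}—which handles the single-step case of a $\mathbb{Z}$-normal cover—as the splitting engine, and elementary covering space theory to propagate the splitting through an intermediate tower.

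The base case $k=0$ is trivial with $N=M$. For the inductive step, I would fix a direct-summand subgroup $\Lambda\cong\mathbb{Z}^{k-1}\subset\Gamma\cong\mathbb{Z}^k$ such that $\Gamma/\Lambda\cong\mathbb{Z}$ (taking $\Lambda$ to be a complementary $\mathbb{Z}^{k-1}$-factor in any splitting $\mathbb{Z}^k\cong\mathbb{Z}^{k-1}\oplus\mathbb{Z}$). Form the intermediate quotient $\check{M}_1:=\hat{M}/\Lambda$. Then $\check{M}_1\to M$ is a normal cover with deck group $\Gamma/\Lambda\cong\mathbb{Z}$, and Lemma~\ref{SplittingInduction} delivers an isometric splitting $\check{M}_1\cong\mathbb{R}\times N_1$, where $N_1$ is open, of nonnegative Ricci curvature, and of linear volume growth.

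Next, I would lift this splitting to $\hat{M}$. The covering $\hat{M}\to\check{M}_1=\mathbb{R}\times N_1$ is a normal $\Lambda\cong\mathbb{Z}^{k-1}$-cover; since the $\mathbb{R}$-factor is simply connected, the standard classification of covers of a product with a simply connected factor yields an isometric decomposition $\hat{M}\cong\mathbb{R}\times\hat{N}_1$, in which $\hat{N}_1\to N_1$ is a normal $\mathbb{Z}^{k-1}$-cover and $\Lambda$ acts trivially on the $\mathbb{R}$-factor. Applying the inductive hypothesis to $\hat{N}_1\to N_1$—whose ingredients (open base of nonnegative Ricci and linear volume growth, normal $\mathbb{Z}^{k-1}$-cover) match those of the lemma—we obtain $\hat{N}_1\cong\mathbb{R}^{k-1}\times N$ for some open manifold $N$ with linear volume growth. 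Concatenating gives $\hat{M}\cong\mathbb{R}\times\mathbb{R}^{k-1}\times N=\mathbb{R}^k\times N$, completing the induction.

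I do not foresee a serious obstacle: the essential work has been done upstream in Proposition~\ref{EffectiveVer.}, Lemma~\ref{Nonsplittingreduction}, and Lemma~\ref{SplittingInduction}. The only points requiring minor care are the correct choice of $\Lambda$ so that $\Gamma/\Lambda$ is genuinely $\mathbb{Z}$ rather than $\mathbb{Z}$ with torsion (which amounts to choosing $\Lambda$ as a direct summand), and verifying that the lifted $\Lambda$-action on $\mathbb{R}\times\hat{N}_1$ is trivial on the Euclidean factor so that the inductive hypothesis applies cleanly to the fiber $N_1$; both are standard consequences of covering space theory combined with the simple-connectedness of~$\mathbb{R}$.
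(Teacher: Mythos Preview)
Your proposal is correct and follows essentially the same approach as the paper: induction on $k$, pass to the intermediate quotient $\hat{M}/\Lambda$ with $\Lambda\cong\mathbb{Z}^{k-1}$ and $\Gamma/\Lambda\cong\mathbb{Z}$, apply Lemma~\ref{SplittingInduction} to the $\mathbb{Z}$-cover $\hat{M}/\Lambda\to M$ to split off one $\mathbb{R}$-factor with a linear-volume-growth complement, lift that factor to $\hat{M}$, and then invoke the inductive hypothesis on the remaining $\mathbb{Z}^{k-1}$-cover. The paper's proof is line-for-line the same argument, with slightly less commentary on the choice of $\Lambda$ and the lifting step.
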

\begin{proof}
	We proceed by induction on $k$. The case $k = 0$ is trivial. For the inductive hypothesis, we assume the validity of the theorem for all nonnegative integers up to $k-1$. 
	
	We choose a $\Z^{k-1}$-subgroup $\Lambda\subset \Gamma$ such that $\Gamma/\Lambda\cong\Z$. Then Lemma \ref{SplittingInduction} applies to $\hat M/\Lambda\to \hat M/\Gamma$ which yields that $\hat M/\Lambda$ is isometric to $\R\times N_1$ for some $N_1$ with linear volume growth. We lift the $\mathbb{R}$-factor from $\mathbb{R} \times N_1$ via the covering map $\hat{M} \to \hat M/\Lambda=\mathbb{R} \times N_1$, obtaining an isometric splitting $\hat{M} \cong \mathbb{R} \times \hat{M}_1$, where $\Lambda$ acts trivially on the $\R$-factor of $\R\times\hat M_1$. Now we can apply the inductive hypothesis to the covering $\hat M_1\to N_1 $ to conclude that $\hat M_1$ splits isometrically as $\R^{k-1}\times N$ where $N$ has linear volume growth.

\end{proof}

\begin{proof}[\textbf{Proof of Theorem \ref{Fibration} and Remark \ref{rem1}} ]

Let $\Gamma$ be a normal $\Z^k$-subgroup of $\pi_1(M)$ with finite index. Note that $\tilde M/\Gamma$ is a finite cover of $M$ which has linear volume growth. We apply Lemma \ref{Splitting&LVG} to $\tilde M\to\tilde M/\Gamma$, yielding that $\tilde M$ splits isometrically as $\R^{k}\times N$ with $N$ linear volume growth. This concludes the proof of Theorem \ref{Fibration}. We next address the action of $\pi_1(M)$ on $\tilde{M}$ that was mentioned in Remark~\ref{rem1}.

(1) If $N$ contains no line, then $\pi_1(M)$ acts separately on $\mathbb{R}^k$ and $N$. By Theorem~\ref{Isom}, the isometry group $\operatorname{Isom}(N)$ is compact. So the closure of induced $\pi_1(M)$-action on $N$ is compact. For any $\gamma\in\Gamma\setminus\{e\}$, if $\gamma$ acts on $\R^k$ trivially, then the subgroup generated by $\gamma$, $\spa{\gamma}$, acts on $N$ properly discontinuously, which contradicts that $\Isom (N)$ is compact. Consequently, the induced action of $\Gamma$ on $\mathbb{R}^k$ is effective and must be by translations. 
	
	We now turn to the case where $N$ contains a line. By Theorem~\ref{Isom}, $N$ is isometric to $\mathbb{R} \times S$ for some compact manifold $S$, and $\pi_1(M)$ acts separately on $\tilde M=\mathbb{R}^{k+1}\times S$, with the $\Gamma$-action on $\mathbb{R}^{k+1}$ being free. The analysis of this case is a direct consequence of the following elementary fact about isometry groups of Euclidean spaces.
	
	\begin{slem}\label{sublem}
		Let $G$ be a group acting isometrically on $\mathbb{R}^n$, and suppose $G$ contains a normal subgroup $\Gamma \cong \mathbb{Z}^k$ that is closed in $\operatorname{Isom}(\mathbb{R}^n)$. Then $\mathbb{R}^n$ admits a decomposition $\mathbb{R}^k \times \mathbb{R}^{n-k}$ such that:
		\begin{enumerate}
			\item $G$ acts separately on the two factors;
			\item $\Gamma$ acts freely by translations on the $\mathbb{R}^k$-factor;
			\item $\Gamma$ fixes a point in the $\mathbb{R}^{n-k}$-factor.
		\end{enumerate}
	\end{slem}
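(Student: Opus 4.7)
The plan is to construct a $\Gamma$-invariant affine subspace of $\mathbb{R}^n$ on which $\Gamma$ acts freely by translations, and then invoke the normality of $\Gamma$ in $G$ to show that $G$ respects the resulting orthogonal decomposition. I would carry this out in roughly five steps.

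First, since $\Gamma$ is closed in the Lie group $\Isom(\mathbb{R}^n)$ and is abstractly $\mathbb{Z}^k$, Cartan's closed-subgroup theorem makes it a $0$-dimensional Lie subgroup, hence discrete. Writing each $\gamma \in \Gamma$ as $\gamma(x) = A_\gamma x + b_\gamma$ with $A_\gamma \in \Ort(n)$, decompose $b_\gamma = b_{\gamma,\parallel} + b_{\gamma,\perp}$ with respect to $\ker(A_\gamma - I) \oplus \ker(A_\gamma - I)^{\perp}$. If $b_{\gamma,\parallel} = 0$, then $\gamma$ is conjugate to the pure rotation $A_\gamma$, and being of infinite order it would generate a non-discrete cyclic subgroup; discreteness of $\Gamma$ therefore forces $b_{\gamma,\parallel} \neq 0$ for every $\gamma \neq e$, and $\gamma$ is a screw motion with axis $\operatorname{Min}(\gamma)$ on which it acts as pure translation by $b_{\gamma,\parallel}$.

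Second, I set $W := \bigcap_{\gamma \in \Gamma} \operatorname{Min}(\gamma)$. Because $\Gamma$ is abelian, each element preserves the min set of every other, and an inductive argument over a generating set shows $W$ is non-empty: for commuting $\gamma_1, \gamma_2$, the orthogonal decomposition $\mathbb{R}^n = \operatorname{Min}(\gamma_1) \oplus \operatorname{Min}(\gamma_1)^{\perp}$ is preserved by $\gamma_2$ (its rotational part commutes with $A_{\gamma_1}$ and thus preserves $\ker(A_{\gamma_1}-I)$), which lets one descend to an isometry of $\operatorname{Min}(\gamma_1)$ and iterate. Each $\gamma$ acts on $W$ as the pure translation by $b_{\gamma,\parallel}$, so $\tau(\gamma) := b_{\gamma,\parallel}$ is a homomorphism from $\Gamma$ into the translation vector space of $W$. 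By the first step, $\tau$ is injective, so $\tau(\Gamma) \cong \mathbb{Z}^k$ is discrete of rank $k$ in $W$ and spans a $k$-dimensional linear subspace $V$.

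Third, I translate coordinates so that a chosen basepoint $p_0 \in W$ becomes the origin. Then $W$ is a linear subspace through $0$ containing $V$, every $\gamma \in \Gamma$ takes the form $\gamma(x) = A_\gamma x + b_{\gamma,\parallel}$ with $A_\gamma|_W = I$, and $A_\gamma$ (being orthogonal) preserves both $V$ and $V^{\perp}$. Writing $x = x_V + x_{V^{\perp}}$ yields
\[
\gamma(x_V, x_{V^{\perp}}) \;=\; \bigl(x_V + b_{\gamma,\parallel},\; A_\gamma|_{V^{\perp}} x_{V^{\perp}}\bigr),
\]
which establishes properties~(2) and~(3) for the splitting $\mathbb{R}^n = V \oplus V^{\perp}$. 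For property~(1), normality of $\Gamma$ in $G$ gives $g\gamma g^{-1} \in \Gamma$ for every $g \in G$ and $\gamma \in \Gamma$, and a direct computation of how the slide vector transforms under conjugation gives $b_{g\gamma g^{-1},\parallel} = A_g b_{\gamma,\parallel}$. Hence $A_g V \subseteq V$, and by symmetry $A_g V = V$; consequently $A_g$ preserves $V^{\perp}$, forcing $g$ to act separately on the two factors.

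The hardest step will be the non-emptiness of $W$ in this non-cocompact setting, since standard Bieberbach-type results do not directly apply to discrete virtually abelian subgroups of $\Isom(\mathbb{R}^n)$ without a cocompactness hypothesis. The argument must exploit the observation that commuting Euclidean isometries induce compatible orthogonal block decompositions—the commuting rotational parts are simultaneously block-diagonalizable—which keeps each residual slide direction tangent to the accumulated invariant subspace as the intersection is built up, so that at each stage the next generator still achieves its global minimum displacement on the common invariant subspace. Once $W$ is constructed, the remainder is elementary Euclidean geometry combined with the normalizer identity $b_{g\gamma g^{-1},\parallel} = A_g b_{\gamma,\parallel}$.
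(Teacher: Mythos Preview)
Your argument is correct and takes a genuinely different route from the paper's appendix. The paper obtains the splitting $\mathbb{R}^k\times\mathbb{R}^{n-k}$ and properties (2)--(3) in one stroke by observing that $\Gamma$ acts freely (its point stabilizers are compact, hence trivial), so $\mathbb{R}^n/\Gamma$ is a flat open manifold whose soul is a $k$-torus; the preimage of the soul is the desired $\mathbb{R}^k$-slice, on which $\Gamma$ acts by translations. For property (1) the paper tracks how an arbitrary $\omega\in G$ moves a line $\sigma(t)=(tv,0^{n-k})$ with $v=\gamma(0^k)$: normality gives $\omega\gamma^l(0,0)=\gamma_1^l\omega(0,0)$, and since the $\Gamma$-orbit of the $\mathbb{R}^{n-k}$-component of $\omega(0,0)$ stays bounded, the line $\omega\sigma$ must remain in a single $\mathbb{R}^k$-slice; doing this for $k$ independent directions shows $A_\omega$ preserves $\mathbb{R}^k$. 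Your approach instead builds the invariant subspace by hand via the common min-set $W=\bigcap_\gamma\operatorname{Min}(\gamma)$ and the span $V$ of the slide vectors, and then deduces (1) from the conjugation identity $b_{g\gamma g^{-1},\parallel}=A_g b_{\gamma,\parallel}$. The paper's route is shorter if one is willing to invoke the soul theorem; yours is more self-contained and makes the linear algebra explicit, which also makes the normalizer step cleaner. Your acknowledged ``hardest step'' (nonemptiness of $W$) is handled correctly: because the commuting rotational parts $A_{\gamma_i}$ are simultaneously reducible, the translation vector of $\gamma_2$ in coordinates centered on $\operatorname{Min}(\gamma_1)$ already lies in $\ker(A_{\gamma_1}-I)$, so the min-displacement of $\gamma_2|_{\operatorname{Min}(\gamma_1)}$ agrees with the global one and the induction goes through.
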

	
	The proof of this sublemma is provided in the appendix. Applying the sublemma to the case where $n = k+1$ and $G = \pi_1(M)$, we conclude that the $\mathbb{R}^{k+1}$-factor of $\tilde{M}$ admits a decomposition $\mathbb{R}^k \times \mathbb{R}^1$ such that $\pi_1(M)$ acts separately on the two factors: $\Gamma$ acts freely by translations on the $\mathbb{R}^k$-factor and fixes a point on the $\mathbb{R}^1$-factor. Since $\pi_1(M)/\Gamma$ is finite, the induced $\pi_1(M)$-action on $\mathbb{R}^1$ has compact closure. Therefore, $\tilde{M}$ admits an isometric decomposition
	\[
	\mathbb{R}^k \times N := \mathbb{R}^k \times (\mathbb{R}^1 \times S)
	\]
	satisfying the required conclusions.

(2) Note that the abelianization of $\pi_1(M)$ satisfies $\pi_1(M)/[\pi_1(M), \pi_1(M)] \cong \mathbb{Z}^{b_1} \oplus T$, where $b_1$ is the first Betti number of $M$ and $T$ is a finite abelian group. Let $K$ be the kernel of the natural homomorphism 
\[
\pi_1(M) \to \left( \pi_1(M)/[\pi_1(M), \pi_1(M)] \right) / T \cong \mathbb{Z}^{b_1}.
\]
Applying Lemma~\ref{Splitting&LVG} to the covering $\tilde{M}/K \to M$, we obtain an isometric splitting $\tilde{M}/K \cong \mathbb{R}^{b_1} \times N_2$, where $N_2$ has linear volume growth. By an argument analogous to that in part (1), and after possibly rechoosing the $\mathbb{R}^{b_1}$-factor if necessary, the quotient group $\pi_1(M)/K$ acts freely on $\mathbb{R}^{b_1}$ by translations. Lifting the $\mathbb{R}^{b_1}$-factor through the covering map $\tilde{M} \to \tilde{M}/K$, we deduce an isometric decomposition $\tilde{M} \cong \mathbb{R}^{b_1} \times \mathbb{R}^{k-b_1} \times N$. Consequently, the quotient $(\mathbb{R}^{k-b_1} \times N)/K$, which is isometric to $N_2$, inherits linear volume growth.

\end{proof}

\begin{proof}[\textbf{Proof of Corollary \ref{Finiteness}}]
	Let $(\tilde{M}, \tilde{p}) \to (M, p)$ be the universal cover. By Theorem~\ref{Fibration}, $\tilde{M}$ splits isometrically as $\mathbb{R}^k \times N$, where $N$ has linear volume growth. Since the Ricci curvature of $M$ is positive at some point, the same holds for $\tilde{M}$. This forces $k = 0$, and thus $\tilde{M} = N$ has linear volume growth. 
	
	Applying Lemma~\ref{|Gamma(r)|} to $\Gamma := \pi_1(M)$, we obtain for all sufficiently large $r > 0$:
	\[
	|\Gamma(r)| \le C(n) \frac{\vol(B_r(\tilde{p}))}{\vol(B_r(p))} \le C(n) \frac{Cr}{cr} = C(n)Cc^{-1}.
	\]
	This uniform bound implies that $\Gamma$ is finite.
\end{proof}

\begin{proof}[\textbf{Proof of Corollary \ref{Bounded}}]
	
	By Theorem~\ref{Fibration}, the universal cover $(\tilde{M}, \tilde{p}) \to (M, p)$ admits an isometric splitting $(\tilde{M}, \tilde{p}) \cong (\mathbb{R}^k \times N,(0^k,q))$, with the properties as stated in the theorem. Put $\Gamma=\pi_1(M,p)$.
	
	We first show that there exists a radius $r_0 > 0$ such that $\R^k \times \{q\} \subset \overline{B_{r_0}(\Gamma(\tilde{p}))}$. For any $(x,q)\in\R^k\times \{q\}$, since $\R^k/\Gamma$ is compact, there exists $\gamma\in\Gamma$ such that $d(\gamma(0^k),x)\le \diam (\R^k/\Gamma)$. Then 
	\begin{align*}
		d(\gamma(\tilde p),(x,q))^2=d((\gamma(0^k),\gamma(q)),(x,q))^2&=d(\gamma(0^k),x)^2+d(\gamma(q),q)^2\\&\le \diam(\R^k/\Gamma)^2+\sup_{\gamma'\in\Gamma}d(\gamma'(q),q)^2.
	\end{align*}
	Since the closure of the induced $\Gamma$-action on $N$ is compact (Remark \ref{rem1}), the last term in the above inequality is finite; we denote this value by $r_0^2$. Thus, the first claim follows.
	
	For an arbitrary $\omega\in\Gamma$, let $\sigma_\omega:[0,1]\to\tilde M$ be a shortest geodesic from $\tilde p$ to $\omega(\tilde p)$. Putting $V=\R^k\times\{q\}$, then
	\begin{equation}\label{0:27}
		d(\sigma_\omega(t),V)=td(\omega(q),q)\le\sup_{\gamma'\in\Gamma}d(\gamma'(q),q)< r_0.
	\end{equation}

	Thus, by combining (\ref{0:27}) with the first claim, we have shown that there exists $r_0>0$, such that for every $\omega \in \Gamma$, the shortest geodesic loop representing $\omega$ is contained in $B_{2r_0}(p)$.

\end{proof}

\begin{proof}[\textbf{Proof of Corollary \ref{dim=4}}]
	Since $M$ is not compact and $\pi_1(M)$ is infinite, the first Betti number $b_1(M)\in\{1,2,3\}$.
	
	If $b_1(M)\ge 3$, by \cite{Ye24a}, $M$ is flat.
	
	If $b_1(M) = 2$, then by Remark~\ref{rem1} (2), the universal cover $\tilde{M}$ admits an isometric decomposition $\mathbb{R}^2 \times \mathbb{R}^{k-2} \times N$. Consequently, $M$ has nonnegative sectional curvature. By the Cheeger--Gromoll soul theorem, $M$ is homotopy equivalent to its soul $S$.

	If $b_1(M) = 1$, then by Remark~\ref{rem1} (2), $\tilde{M}$ admits an isometric decomposition $\mathbb{R} \times \mathbb{R}^{k-1} \times N$, where $k$ and $N$ are as described in Theorem \ref{Fibration}. The cases $k = 2$ and $k = 3$ both imply that $M$ has nonnegative sectional curvature. We therefore focus on the case $k = 1$, in which $N$ is a 3-dimensional simply connected manifold with nonnegative Ricci curvature and linear volume growth. By \cite{Liu13}, $N$ is either diffeomorphic to $\mathbb{R}^3$ or isometric to the Riemannian product $S^2 \times \mathbb{R}$, where $S^2$ is a topological sphere with nonnegative sectional curvature. The latter case again implies nonnegative sectional curvature for $M$, so we assume $N$ is diffeomorphic to $\mathbb{R}^3$. By Remark~\ref{rem1}, the commutator subgroup $[\pi_1(M), \pi_1(M)]$ is a subgroup of $\operatorname{Isom}(N)$. Theorem~\ref{Isom} implies that $[\pi_1(M), \pi_1(M)]$ is finite. Combining this with the contractibility of $N$, we conclude that the commutator subgroup is trivial. Therefore, $\pi_1(M)$ is abelian. Moreover, since $\tilde{M}$ is contractible, $\pi_1(M)$ must be free abelian. We conclude that $\pi_1(M) \cong \mathbb{Z}$, and there exists a fiber bundle map $M \to S^1$ with fiber diffeomorphic to $\R^3$, showing that $M$ is homotopy equivalent to $S^1$.

\end{proof}

\section{Splitting and the Infimum of Volume Growth Order $<3$}

\subsection{Proof of Proposition \ref{EffectiveVer.}(2)}
By Lemma~\ref{CriticalScaling}, for every $(X, x, G) \in \Omega(\hat{M}, \Gamma)$, $(X, x)$ splits isometrically as $(\mathbb{R}^{n-2} \times Y, (0^{n-2}, y^*))$, where $Y$ contains no line, and $G(x) = \mathbb{R}^{n-2} \times \{y^*\}$. So by Theorem \ref{escaprate}, $\Gamma$ contains a $\Z^{n-2}$-subgroup with finite index.

Let $H^{1+\epsilon}_0(\hat M)$ be the space of harmonic functions $f:(\hat M,\hat p)\to(\mathbb{R},0)$ satisfying that there exists some $C>0$ such that $|f(x)|\le Cd(x,\hat p)^{1+\epsilon}+C$ for any $x\in\hat M$. Define $W:=\bigcap_{\epsilon>0}H^{1+\epsilon}_0(\hat M)$. By the stability property of equivariant asymptotic cones and \cite[Theorem 1.10]{HH24}, $\dim(W)=n-2$, and there exists a basis $\{u^1,u^2,\ldots,u^{n-2}\}\subset W$ satisfying the following conditions:
\begin{itemize}
	\item For any sufficiently large $R$, there exists a lower diagonal matrix $T_R$ with positive diagonal entries such that $T_R\circ (u^1,\ldots, u^{n-2}): B_R(\hat p)\to \mathbb{R}^{n-2}$ is a $(\Psi(\frac{1}{R}), n-2)$-splitting map, where $\Psi(\frac{1}{R})$ is a function tending to $0$ as $R\to\infty$.
	\item $\|T_R\|\le R^{\Psi(\frac1R)}$.
\end{itemize}

For simplicity, let $\mathbf{u}:=(u^1,u^2,\ldots,u^{n-2})$. Then for every $\gamma\in\Gamma$, there exist a matrix $A\in \mathrm{GL}(n-2,\mathbb{R})$ and a vector $\mathbf{b}\in\mathbb{R}^{n-2}$ such that $\mathbf{u}\circ \gamma = A\mathbf{u} + \mathbf{b}$. In particular, this implies that
\begin{itemize}
	\item $\Gamma$ permutes the level sets of $\mathbf{u}$.
\end{itemize}

For every sequence $r_i \to \infty$, define $(N_i, p_i) := (r_i^{-1} \hat{M}, \hat{p})$ and $\mathbf{u}_i := \frac{T_{6r_i} \mathbf{u}}{r_i}|_{B_{6r_i}(\hat{p})}$. After passing to a subsequence, there exist a group $G$ and a pointed space $(Y, y^*)$ such that
\begin{equation}\label{Conv21:55}
	(N_i, p_i, \Gamma) \stackrel{GH}{\longrightarrow} (X, x^*, G), \quad \mathbf{u}_i \to \mathbf{u}_\infty \text{ in } W^{1,2},
\end{equation}
where $(X, x^*) \cong (\mathbb{R}^{n-2} \times Y, (0^{n-2}, y^*))$ and $\mathbf{u}_\infty$ is the restriction to $B_6(x^*)$ of the standard coordinate functions on the $\mathbb{R}^{n-2}$-factor.

\begin{lem}\label{AchiveMini}
	For every level set $\Sigma$ of $\mathbf{u}$, there exists $\gamma\in\Gamma$ such that 
	\begin{equation}\label{FullfilDistance}
		d(\gamma(\hat p),\Sigma)=\inf\{d(\gamma'(\hat p),\Sigma)|\gamma'\in\Gamma\}.
	\end{equation}
	
\end{lem}
\begin{proof}
	Argue by contradiction. Let $\Sigma = \mathbf{u}^{-1}(\mathbf{x})$ for some $\mathbf{x} \in \mathbb{R}^{n-2}$ satisfy that, for every $\gamma \in \Gamma$, 
	\[
	d(\gamma(\hat{p}), \Sigma) > \inf \{ d(\gamma'(\hat{p}), \Sigma) \mid \gamma' \in \Gamma \}.
	\]
	Choose a sequence of $\gamma_i \in \Gamma$ such that $d(\gamma_i(\hat{p}), \Sigma)$ converges to the infimum. Since $\Gamma$ is discrete, $r_i := d(\gamma_i(\hat{p}), \hat{p}) \to \infty$. Now we consider the convergence (\ref{Conv21:55}). Let $q_i \in \Sigma$ be a projection point of $\gamma_i(\hat{p})$. We can assume $q_i \to q_\infty$ and $\gamma_i \to g \in G$ with respect to (\ref{Conv21:55}) with $q_\infty = g(x^*)$ and $d(g(x^*), x^*) = 1$. So $\mathbf{u}_i(q_i) \to \mathbf{u}_\infty(q_\infty) \in \partial B_1(0^{n-2}) \times \{y^*\}$. However, there exist $\Psi_i\to0$ such that, 
	\[
	\|\mathbf{u}_i(q_i)\| = \left\|\frac{T_{6r_i} \mathbf{u}(q_i)}{r_i}\right\| \le \frac{6^{\Psi_i} \|\mathbf{x}\|}{r_i^{1-\Psi_i}} \to 0, 
	\]
	which is a contradiction.
	
\end{proof}

\begin{lem}\label{LVisclose}

	There exists $D > 0$ such that for every level set $\Sigma$ of $\mathbf{u}$, there is some $\gamma \in \Gamma$ satisfying $d(\gamma(\hat{p}), \Sigma) \le D$.
	
\end{lem}
\begin{proof}

	We argue by contradiction. Assume there exists a sequence of level sets $\Sigma_i$ of $\mathbf{u}$ for which the minimal distances 
	$r_i := \min\{d(\gamma(\hat{p}), \Sigma_i) \mid \gamma \in \Gamma\}$ tend to infinity, 
	where the minimum is well-defined by Lemma~\ref{AchiveMini}. Since $\Gamma$ permutes the level sets, without loss of generality we may assume that there exists $q_i \in \Sigma_i$ such that $d(\hat{p}, q_i) = d(\hat{p}, \Sigma_i) = r_i. $

	We consider the convergence in (\ref{Conv21:55}) for the sequence $r_i$ and assume that $q_i \to q_\infty$. Let $q_\infty = (\mathbf{v}, y) \in \mathbb{R}^{n-2} \times Y$.

	\textbf{Claim:} $\mathbf{v} = 0$. If not, let $x_\infty := (\mathbf{v}, y^*)$. Since $G(x^*) = \mathbb{R}^{n-2} \times \{y^*\}$, there exists $\gamma_\infty \in G$ such that $\gamma_\infty(x^*) = x_\infty$. Choose $\gamma_i \in \Gamma$ with $\gamma_i \to \gamma_\infty$ with respect to the convergence in (\ref{Conv21:55}). Then $d_{N_i}(\gamma_i(p_i), q_i) \to d(x_\infty, q_\infty) = d(y^*, y) < \sqrt{\|\mathbf{v}\|^2 + d(y^*, y)^2} = d(q_\infty, x^*) = 1$.
	
	Thus, there exists $\eta > 0$ such that for all sufficiently large $i$, $d(\hat{p}, \gamma_i^{-1}(q_i)) \le r_i(1 - \eta) < r_i$, which contradicts the definition of $r_i$.

	We will need the following sublemma, whose proof we postpone to the appendix.
	
	\begin{slem}\label{LvGHLv}
		For each $n, v, \epsilon > 0$, there exists $\delta = \delta(n, v, \epsilon) > 0$ satisfying the following. 
		Suppose $M$ is an $n$-manifold with $\mathrm{Ric}_M \ge -\delta$ and $\overline{B_{12}(p)}$ is compact. Let $\mathbf{u}: (B_{6}(p), p) \to (\mathbb{R}^k, 0^k)$ be a $(\delta, k)$-splitting map. If $\vol(B_1(p)) > v$, then $B_1(0^k)\subset \mathbf{u}(B_{1+\epsilon}(p))$.
	\end{slem}

	Continue the proof of the lemma. By the Claim, $\mathbf{x}_i := \mathbf{u}_i(q_i) \to \mathbf{u}_\infty(q_\infty) = 0^{n-2}$. Applying the above sublemma, we conclude that there exists $q_i' \in B_{0.5}(p_i)$ such that $\mathbf{u}_i(q_i') = \mathbf{x}_i = \mathbf{u}_i(q_i)$. Hence $q_i' \in \Sigma_i$ and $d(\hat{p}, \Sigma_i) \le d(\hat{p},q_i') < 0.5r_i < d( \hat{p},q_i)$, which contradicts the choice of $q_i$.
	
\end{proof}

For each level set $\Sigma$ of $\mathbf{u}$, let $\gamma \in \Gamma$ satisfy equation (\ref{FullfilDistance}), and define $\phi(\Sigma) \in \Sigma$ and $\psi(\Sigma) \in \Sigma$ as projections of $\gamma(\hat{p})$ and $\hat{p}$, respectively.

\begin{lem}\label{Lem0:52}
	The ratio $\frac{d(\phi(\Sigma), \psi(\Sigma))}{d(\hat{p}, \Sigma)}$ tends to $0$ as $d(\hat{p}, \Sigma)$ tends to $\infty$.
\end{lem}
\begin{proof}
	Argue by contradiction. Assume there exists a sequence of level sets $\Sigma_i$ of $\mathbf{u}$ such that
	\[
	\liminf_{i\to\infty}\frac{d(\phi(\Sigma_i), \psi(\Sigma_i))}{d(\hat{p}, \Sigma_i)}>0, \quad \lim_{i\to\infty} d(\hat{p}, \Sigma_i) = \infty.
	\]
	
	Let $r_i := d(\hat{p}, \phi(\Sigma_i))$, $q_i := \phi(\Sigma_i)$, and $q_i' := \psi(\Sigma_i)$. Choose $\gamma_i \in \Gamma$ such that $\gamma_i$ satisfies equation (\ref{FullfilDistance}) for $\Sigma_i$, and $q_i$ is a projection point of $\gamma_i(\hat{p})$ to $\Sigma_i$.
	
	Now we again consider convergence (\ref{Conv21:55}) for $r_i$ defined as above, and assume that $q_i \to q_\infty$ and $q_i' \to q_\infty'$.
	
	 By Lemma~\ref{LVisclose}, we have
	\[
	d(\gamma_i(\hat{p}), q_i) \le D, \quad r_i - D \le d(\hat{p}, \gamma_i(\hat{p})) \le r_i + D.
	\]
	Thus we can assume $\gamma_i \to g \in G$ with respect to the convergence in (\ref{Conv21:55}). Consequently, $q_i \to q_\infty = g(x^*) \in \partial B_1(0^{n-2}) \times \{y^*\}$ and
	\[
	\mathbf{u}_i(q_i') = \mathbf{u}_i(q_i) \to \mathbf{u}_\infty(q_\infty') = \mathbf{u}_\infty(q_\infty) =: \mathbf{x} \in \partial B_1(0^{n-2}).
	\]
	
	Let $q_\infty' = (\mathbf{x}, y)$ with $\|\mathbf{x}\| = 1$. Since $d_{N_i}(p_i, q_i) \ge d_{N_i}(p_i, q_i')$,
	\[
	1 = d(x^*, q_\infty) \ge d(x^*, q_\infty') = \sqrt{\|\mathbf{x}\|^2 + d(y, y^*)^2} \ge 1,
	\]
	which forces $y = y^*$. In conclusion, $q_\infty = q_\infty'$ and $r_i^{-1}d(q_i, q_i') \to 0$, which implies
	\[
	 \frac{d(q_i, q_i')}{d(\hat{p}, \Sigma_i)}\le \frac{d(q_i, q_i')}{d(\hat p,q_i)-d(q_i,q_i')} \to 0.
	\]
	This contradicts the initial assumption.
\end{proof}

The key ingredient is the slicing Theorem due to Cheeger-Naber.

\begin{thm}\cite[Theorem 1.23]{CN15}\label{slicing-thm}
	For each $\epsilon > 0$, there exists $\delta(n,\epsilon) > 0$ such that if $M^n$ satisfies $\Ric_{M^n} \geq-(n-1)\delta$ and if $u : B_{2}(p)\rightarrow\R^{n-2}$ is a harmonic $\delta$-splitting map, then there exists a subset $G_{\epsilon}\subseteq B_{1}(0^{n-2})$ that satisfies the following:
	\begin{enumerate}
		\item $\vol(G_{\epsilon}) > \vol(B_{1}(0^{n-2}))-\epsilon$;
		\item if $\mathbf{x}\in G_{\epsilon}$, then $u^{-1}(\mathbf{x})$ is nonempty;
		\item for each $x \in u^{-1}(G_{\epsilon})$ and $r\leq 1$, there exists a lower triangular matrix $A\in \mathrm{GL}(n-2)$ with positive diagonal entries such that $A\circ u : B_{r}(x)\rightarrow\R^{n-2}$ is an $\epsilon$-splitting map.
	\end{enumerate}

\end{thm}

Now we are ready to prove part (2) of Proposition~\ref{EffectiveVer.}. Fix an arbitrary large $r > 0$. For any sufficiently large $R > r$, we apply the slicing theorem to the rescaled map 
\[
\mathbf{u}_R := R^{-1}T_{6R} \circ \mathbf{u}|_{B_6(\hat{p}, R^{-1}\hat{M})}.
\]
We can find a vector $\mathbf{x} \in B_{0.2}(0^{n-2}) \setminus \overline{B_{0.1}(0^{n-2})}$ such that:
\begin{itemize}
	\item The preimage $\mathbf{u}_R^{-1}(\mathbf{x}) \cap B_{0.4}(\hat{p}, R^{-1}\hat{M})$ is nonempty;
	\item For every $x \in \mathbf{u}_R^{-1}(\mathbf{x}) \cap B_{1}(\hat{p}, R^{-1}\hat{M})$ and every $s \leq 1$, the geodesic ball $B_s(x, R^{-1}\hat{M})$ is $\Psi(1/R) s$-close to a geodesic ball of radius $s$ in $\mathbb{R}^{n-2} \times Y$ for some $Y$,
\end{itemize}
where the latter conclusion follows from the quantitative splitting theorem \cite{CC96} and property (3) of Theorem~\ref{slicing-thm}.

Let $\Sigma_R := \mathbf{u}^{-1}(R T_{6R}^{-1} \mathbf{x})$. Then $\psi(\Sigma_R) \in B_{0.4}(\hat{p}, R^{-1}\hat{M}) \setminus \overline{B_{0.09}(\hat{p}, R^{-1}\hat{M})}$. Applying Lemma~\ref{Lem0:52}, we conclude that $\phi(\Sigma_R) \in B_1(\hat{p}, R^{-1}\hat{M})$. Hence $\phi(\Sigma_R)$ satisfies that for any $s \leq R$, the geodesic ball $B_s(\phi(\Sigma_R))$ is $\Psi(1/R) s$-close to a geodesic ball of radius $s$ in $\mathbb{R}^{n-2} \times Y$ for some $Y$. By Lemma~\ref{LVisclose}, there exists $\gamma \in \Gamma$ such that $d(\gamma(\phi(\Sigma_R)), \hat{p}) \le D$. Noting that $B_r(\hat{p}) \subset B_{2r}(\gamma(\phi(\Sigma_R)))$, we conclude that $B_r(\hat{p})$ is $\Psi(1/R) r$-close to a geodesic ball of radius $r$ in $\mathbb{R}^{n-2} \times Y$ for some $Y$. Letting $R \to \infty$ and then $r \to \infty$, we conclude that $\hat{M}$ splits off an $\mathbb{R}^{n-2}$-factor isometrically. This completes the proof.

\subsection{Proof of Theorem \ref{Quadra}}

By \cite[Corollary 4]{KW11}, $\pi_1(M)$ contains a nilpotent subgroup of index $\le C(n)$. Hence up to a finite cover, \textbf{we assume $\Gamma:=\pi_1(M)$ is nilpotent}. In particular, for any $(Y,y,G)\in\Omega(\tilde M,\Gamma)$, $G$ is nilpotent.

By Cheeger--Colding \cite{CC96}, for any $(Y, y, G) \in \Omega(\tilde{M}, \Gamma)$, the space $(Y, y)$ is isometric to some $(\mathbb{R}^k \times C(Z), (0^k, z^*))$, where $C(Z)$ is the metric cone over $Z$ with $\operatorname{diam}(Z) < \pi$. In particular, the orbit $G(y)$ is a closed submanifold of $\mathbb{R}^k \times \{z^*\}$. If $k\ge n-1$, by Cheeger-Colding's codimension-$2$ regularity theorem \cite{CC97} and Colding's volume rigidity theorem \cite{Col97}, then $M$ is flat and the required conclusion follows immediately. \textbf{Hence, in what follows, we assume that $k\le n-2$ for every $Y$.}

Let $a:=\liminf_{r\to\infty}\frac{\vol(B_r(p))}{r^{3-\delta}}<\infty$ and $v:=\lim_{r\to\infty}\frac{\vol(B_r(\tilde p))}{r^n}>0$, where $\tilde p\in\tilde M$ is a preimage of $p$.
By Lemma~\ref{|Gamma(r)|},
\[
c_nv\frac{r^n}{\vol(B_r(p))}\le c_n \frac{\vol(B_r(\tilde{p}))}{\vol(B_r(p))} \le |\Gamma(r)|.
\]
Hence,
\begin{equation}\label{LiminfNondegenerate}
	\frac{c_nv}{a}\le  \limsup_{r\to\infty}\frac{|\Gamma(r)|}{r^{n-3+\delta}}.
\end{equation}
By \cite[Proposition 2.2]{Ye24}, there exists a triple $(X, x, H) \in \Omega(\tilde{M}, \Gamma)$ such that the orbit $H(y)$ has lower box dimension at least $n-3+\delta$. This implies that the maximal Euclidean factor of $X$ has dimension at least $n-2$.
Then $(X, x)$ is isometric to $(\mathbb{R}^{n-2} \times C(S_r^1), (0^{n-2}, z^*))$, where $S_r^1$ denotes a circle of radius $r \in (0,1)$.
Since $H(x)$ is a closed submanifold of the $\mathbb{R}^{n-2}$-slice through $x$, it must coincide with the entire $\mathbb{R}^{n-2}$-slice.

According to Lemma~\ref{CriticalScaling} and the above discussion, we conclude that, for every $(Y, y,G) \in \Omega(\tilde{M},\Gamma)$, $(Y,y)$ is isometric to $(\R^{n-2} \times C(S_r^1),(0^{n-2},z^*))$ and $G(y)=\R^{n-2}\times\{z^*\}$. Moreover, by Colding's volume convergence theorem \cite{Col97}, the radius $r$ is determined solely by the volume ratio $v$ of $\tilde{M}$ and is independent of the choice of $(Y, y)$.

Now we can apply Proposition~\ref{EffectiveVer.}(2) to obtain the splitting of $\tilde{M}$. In particular, $M$ has nonnegative sectional curvature. We can also apply Theorem~\ref{escaprate} to conclude that $\pi_1(M)$ contains a $\mathbb{Z}^{n-2}$-subgroup. Hence the soul $S$ of $M$ has dimension $n-1$ or $n-2$; in either case, the universal cover of $S$ is an Euclidean space.

\subsection{Proof of Corollary \ref{b_1>n-3}}

	Define 
	\[
	\Gamma = \left( \pi_1(M) / [\pi_1(M),\pi_1(M)] \right) / T, \quad 
	(\hat{M},\hat p) := \left( (\tilde{M},\tilde p) / [\pi_1(M),\pi_1(M)] \right) / T,
	\]
	where $T$ is the torsion subgroup and $\tilde p\in\tilde M$ is a preimage of $p$. Then $(\hat{M},\hat{p}) \to (M,p)$ is a normal cover with deck transformation $\Gamma$, where $\Gamma$ is torsion-free abelian. A priori, we do not know whether $\Gamma$ is finitely generated, but it must contain a $\mathbb{Z}^{b_1}$-subgroup. By Lemma~\ref{18:09}, for large $r>0$,
	\begin{equation*}\label{upperboundorder2}
		|\Gamma(r)| \ge c r^{b_1}, \quad \vol(B_r(p)) \le C r^{n-b_1}\le Cr^{2}.
	\end{equation*}
	If $\tilde{M}$ has Euclidean volume growth, then by Theorem~\ref{Quadra}, the conclusion follows directly.
	
Suppose that $\tilde{M}$ does not have Euclidean volume growth. Then for any $(Y, y, G) \in \Omega(\hat{M}, \Gamma)$, the rectifiable dimension of $Y$ is at most $n-1$. By a standard diagonal argument and \cite[Corollary 3.2]{PY24}, the rectifiable dimension of $Y/G$ is $1$. Consequently, every $(X, x) \in \Omega(M)$ has rectifiable dimension $1$. We now require the following well-known fact, whose proof is deferred to the appendix.

\begin{slem}\label{1-dim}
	If every asymptotic cone of an open manifold $N$ with nonnegative Ricci curvature has rectifiable dimension $1$, then $\Omega(N)$ consists of a single element, isometric to $(\mathbb{R},0)$ or $(\mathbb{R}_{\ge 0},0)$.
\end{slem}

By the above sublemma, the finite generation of $\pi_1(M)$ follows from Sormani's pole group theorem \cite{Sor99}.
 
\section*{Appendix: Proofs of Some Lemmas}

\subsection{Proof of Sublemma \ref{sublem}}

Since $\Gamma \cong \mathbb{Z}^k$ is a closed subgroup of $\operatorname{Isom}(\mathbb{R}^n)$ and contains no nontrivial compact subgroups, it follows that its action on $\mathbb{R}^n$ is free. Consequently, the quotient $\mathbb{R}^n / \Gamma$ is a flat open manifold whose soul is a $k$-torus $T^k$. It follows that $\mathbb{R}^n$ admits a metric decomposition as $\mathbb{R}^k \times \mathbb{R}^{n-k}$ such that, after an appropriate choice of origin, the subspace $\mathbb{R}^k \times \{0^{n-k}\}$ is the preimage of $T^k$ under the covering projection $\mathbb{R}^n \to \mathbb{R}^n / \Gamma$. This implies that $\mathbb{R}^k \times \{0\}$ is $\Gamma$-invariant, and the $\Gamma$-action splits isometrically over the product $\mathbb{R}^k \times \mathbb{R}^{n-k}$: it acts freely by translations on the $\mathbb{R}^k$-factor and fixes the origin of the $\mathbb{R}^{n-k}$-factor.

We now consider the action of the full group $G$ on the product decomposition $\mathbb{R}^k\times\mathbb{R}^{n-k}$ obtained above. For any $\gamma\in\Gamma\setminus\{e\}$, set $v:=\gamma(0^k)$. Define the line $\sigma(t):=(tv,0^{n-k})$ in $\mathbb{R}^k\times\mathbb{R}^{n-k}$. Note that $\Gamma$ acts freely by translations on the invariant subspace $\mathbb{R}^k \times \{0^{n-k}\}$, so for every integer $l$, we have $\gamma^l(0^k,0^{n-k})=\sigma(l)$. 

Let $\omega\in G$ and assume $\omega(0^k,0^{n-k})=:(a,b)\in\mathbb{R}^k\times\mathbb{R}^{n-k}$. Then there exists $(\alpha,\beta)\in\mathbb{R}^k\times\mathbb{R}^{n-k}$ such that $\omega(\sigma(t))=(a+t\alpha,b+t\beta)$. Since $\Gamma$ is normal in $G$, there exists $\gamma_1\in\Gamma$ such that $\gamma_1=\omega\gamma\omega^{-1}$. Thus for any integer $l$,
\[
(a+l\alpha,b+l\beta)=\omega(\sigma(l))=\omega\gamma^l(0^k,0^{n-k})=\gamma_1^l\omega(0^k,0^{n-k})=(\gamma_1^l(a),\gamma_1^l(b)).
\]
Recall that $\Gamma$ acts on $\mathbb{R}^{n-k}$ fixing the origin, so $\gamma_1^l(b)\in B_{\|b\|}(0^{n-k})$ for every $l$. This forces $\beta=0$, i.e., $\omega(tv,0^{n-k})=(a+t\alpha,b)$.

By choosing elements $\gamma\in\Gamma\setminus\{e\}$ such that the corresponding vectors $v$ span $k$ linearly independent directions, we see that $\omega$ maps $k$ linearly independent lines in $\mathbb{R}^k\times\{0^{n-k}\}$ to $\mathbb{R}^k\times\{b\}$. This shows that $\omega$ acts separately on $\mathbb{R}^k\times\mathbb{R}^{n-k}$.

\subsection{Proof of Sublemma \ref{LvGHLv}}

	We argue by contradiction. Assume that for some $\epsilon, n, v > 0$, there exist a sequence $(M_i, p_i)$ and maps $\mathbf{u}_i: (B_{6}(p_i), p_i) \to (\mathbb{R}^k, 0^k)$ satisfying the conditions of the lemma with $\delta_i \to 0$, but for each $i$, there exists $\mathbf{x}_i \in B_1(0^k)$ such that $\mathbf{u}_i^{-1}(\mathbf{x}_i) \cap B_{1+\epsilon}(p_i) = \emptyset$.
	
	By the quantitative splitting theorem \cite{CC96}, there exist a length space $(Z_i, z_i)$ and a map $\phi_i: (B_3(p_i), p_i) \to (Z_i, z_i)$ such that the combined map $$(\mathbf{u}_i, \phi_i): B_3(p_i) \to B_{3+\epsilon_i}((0^k, z_i))\subset \R^k\times Z_i$$ is an $\epsilon_i$-GHA (Gromov-Hausdorff approximation), for some $\epsilon_i\to0$. In particular, there exists $q_i\in B_{1}(p_i)$ satisfying $\|\mathbf{u}_i(q_i)-\mathbf{x}_i\|\le \epsilon_i$. For each $i$, define 
	$$\mathbf{u}_i':=\epsilon_i^{-1}(\mathbf{u}_i-\mathbf{u}_i(q_i)).$$
	Then we can choose a new sequence $\epsilon_i \to 0$ sufficiently slowly so that the following properties hold:
	\begin{itemize}
		\item The scaled map
		\[
		(\mathbf{u}_i',\phi_i):B_{\epsilon_i^{-1}}(q_i,\epsilon_i^{-1}M_i)\to B_{\epsilon_i^{-1}+\epsilon_i}((0^k,z_i))\subset \R^k\times\epsilon_i^{-1}Z_i,
		\]
		which sends $q_i$ to $(0^k,z_i)$, is an $\epsilon_i$-GHA.
		
		\item The map $\mathbf{u}_i':B_3(q_i,\epsilon_i^{-1}M_i)\to\R^k$ is an $(\epsilon_i,k)$-splitting map.
		
		\item With $\mathbf{x}_i':=\epsilon_i^{-1}(\mathbf{x}_i-\mathbf{u}_i(q_i))$, we have $\|\mathbf{x}_i'\|\le\epsilon_i$.
	\end{itemize}
	
A contradiction arises once we establish that:
\begin{itemize}
	\item [\textbf{Claim.}] For every sufficiently large $i$, there exists $y_i' \in B_{1}(q_i, \epsilon_i^{-1}M_i)$ such that $\mathbf{u}_i'(y_i') = \mathbf{x}_i'$.
	
\end{itemize}

After passing to a subsequence, we assume that
	\begin{equation}\label{Conv21:03}
		(\epsilon_i^{-1}M_i, q_i) \stackrel{GH}{\longrightarrow} (Y, y^*), \quad \mathbf{u}_i' \myarrow{W^{1,2}} P,
	\end{equation}
	where $(Y,y^*)$ is isometric to $(\R^k\times Z,(0^k,z^*))$ for some non-collapsed $\mathrm{RCD}(0,n-k)$-space and $P:(Y,y^*)\to(\R^k,0^k)$ is the standard projection. Note that $\lim_{i\to\infty}\mathbf{x}_i'=0^k$. Since regular points are dense in $Z$, we can choose a regular point $y \in \{0^k\}\times B_{0.1}(z^*)$ and choose $y_i \in B_{0.2}(q_i,\epsilon_i^{-1}M_i)$ such that $y_i \to y$ with respect to the convergence in (\ref{Conv21:03}). Hence $\mathbf{u}_i'(y_i)\to P(y)=0^k$ which implies 
	\begin{equation}\label{21:07}
		s_i:=\|\mathbf{u}_i'(y_i)-\mathbf{x}_i'\|\to0.
	\end{equation}
	
	Since $y$ is a regular point, for any sufficiently small $\delta > 0$, we can choose $r = r_\delta \in (0, 0.01)$ such that
	\[
	d_{GH}(B_{9r}(y), B_{9r}(0^n)) \le \delta r.
	\]
	In turn, for each sufficiently large $i$, we can find a map $$\mathbf{v}_i: (B_r(y_i,\epsilon_i^{-1}M_i), y_i) \to (\mathbb{R}^{n-k}, 0^{n-k})$$ such that the combined map
	\[
	f_i := (\mathbf{u}_i' - \mathbf{u}_i'(y_i), \mathbf{v}_i): (B_r(y_i,\epsilon_i^{-1}M_i), y_i) \to (\mathbb{R}^n, 0^n)
	\]
	is a $(\Psi(\delta), n)$-splitting map, where $\Psi(\delta) \to 0$ as $\delta \to 0$. By the canonical Reifenberg theorem \cite{CJN21}, $f_i$ is a diffeomorphism onto its image and satisfies $B_{0.9r}(0^n) \subset f_i(B_r(y_i,\epsilon_i^{-1}M_i))$ provided $\delta$ is chosen sufficiently small only depending on $n$. Note that by (\ref{21:07}), we have
	\[
	(\mathbf{x}'_i - \mathbf{u}'_i(y_i), 0^{n-k}) \in \overline{B_{s_i}(0^n)} \subset B_{0.9r}(0^n).
	\]
	Hence, there exists a point $y_i' \in B_r(y_i,,\epsilon_i^{-1}M_i)$ such that $f_i(y_i') = (\mathbf{x}'_i - \mathbf{u}'_i(y_i), 0^{n-k})$, which implies $\mathbf{u}'_i(y_i') = \mathbf{x}'_i$. Note that $y_i' \in B_r(y_i,\epsilon_i^{-1}M_i) \subset B_{0.2+ r}(q_i,\epsilon_i^{-1}M_i) \subset B_{1}(q_i,\epsilon_i^{-1}M_i)$. This proves the claim.
	
\subsection{Proof of Sublemma \ref{1-dim}}
	
By \cite{Hon13} (see also \cite{ChLi16}), for any $(X, x) \in \Omega(N)$, the space $(X, x)$ is isometric to either $(\mathbb{R}, 0)$ or $(\mathbb{R}_{\ge 0}, a)$ for some $a \ge 0$. However, the case $(\mathbb{R}_{\ge 0}, a)$ with $a > 0$ is impossible. Admitting this, the conclusion follows from the connectedness of $\Omega(N)$.

If not, fix a point $p \in N$. Then we can choose sequences $o_i, q_i \in N$ and $r_i \to \infty$ such that
\[
(r_i^{-1} N, o_i, p, q_i) \stackrel{\mathrm{GH}}{\longrightarrow} (\mathbb{R}_{\ge 0}, 0, a, 2a).
\]

Let $\gamma_i$ be a shortest geodesic from $o_i$ to $q_i$. It is clear that
\[
\lim_{i \to \infty} \frac{d_N(p, \gamma_i)}{d_N(o_i, q_i)} = 0.
\]

We now consider two cases:

\textbf{Case 1:} $\liminf_{i \to \infty} d_N(p, \gamma_i) < \infty$. This implies that $N$ splits off a line, which leads to a contradiction.

\textbf{Case 2:} $\lim_{i \to \infty} d_N(p, \gamma_i) = \infty$. Let $s_i := d_N(p, \gamma_i)$. After passing to a subsequence, we have
\[
(s_i^{-1} N, p,\gamma_i) \stackrel{\mathrm{GH}}{\longrightarrow} (X, x,l).
\]
We conclude that $X$ contains a line $l$ that does not pass through $x$, which contradicts the assumption.

\vspace*{20pt}

\noindent\textbf{Acknowledgments.} The authors thank Dr. Zhu Ye for asking whether the proof of an earlier version of Theorem~\ref{Quadra} implied Corollary~\ref{b_1>n-3}, which motivated the improvements in the current versions of these results.

\bibliographystyle{alpha}
\bibliography{ref}

\end{document}